\theoremstyle{plain}
\newtheorem{lemma}{Lemma}[section]
\newtheorem{theorem}[lemma]{Theorem}
\newtheorem{proposition}[lemma]{Proposition}
\theoremstyle{remark}
\def\bb{\begin{color}{blue}}
\def\bg{\begin{color}{green}}
\def\br{\begin{color}{red}}
\def\bbr{\begin{color}{brown}}
\def\eg{\end{color}}
\def\er{\end{color}}
\def\eb{\end{color}}
\def\ve{{\varepsilon}}
\def\le{\leqslant}
\def\ge{\geqslant}
\def\da{\downarrow}
\def\E{{\mathbb E}}
\def\R{{\mathbb R}}
\def\N{{\mathbb N}}
\def\T{{\mathbb T}}
\def\Z{{\mathbb Z}}
\def\a{{\alpha}}
\def\b{{\beta}}
\def\d{{\delta}}
\def\D{{\Delta}}
\def\t{{\tau}}
\def\k{{\kappa}}
\def\g{{\gamma}}
\def\s{{\sigma}}
\def\l{{\lambda}}
\def\z{{\zeta}}
\def\cB{{\cal B}}
\def\cK{{\cal K}}
\def\cL{{\cal L}}
\def\cE{{\cal E}}
\def\cF{{\cal F}}
\def\cV{{\cal V}}
\def\cM{{\cal M}}
\def\half{{\frac12}}
\def\q{\quad}
\def\fsn{{\lfloor s\rfloor_n}}
\def\csn{{\lceil s\rceil_n}}
\def\<{\langle}
\def\>{\rangle}
\def\ra{\rightarrow}
\def\ua{\uparrow}
\def\sse{\subseteq}
\def\sm{\setminus}
\renewcommand\epsilon{\ve}
\begin{document}
\bibliographystyle{plain}

\begin{center}
\LARGE \textbf{Measure solutions for the Smoluchowski coagulation-diffusion equation}

\vspace{0.2in}

\large {\bfseries James Norris
\footnote{Statistical Laboratory, Centre for Mathematical Sciences,
  Wilberforce Road, Cambridge, CB3 0WB, UK }
}

\vspace{0.2in}
\small
\today

\end{center}
\begin{abstract}
A notion of measure solution is formulated for a coagulation-diffusion equation, 
which is the natural counterpart of Smoluchowski's coagulation equation in a spatially inhomogeneous setting.
Some general properties of such solutions are established.
Sufficient conditions are identified on the diffusivity, coagulation rates and initial data for existence, uniqueness and mass conservation of solutions.
These conditions impose no form of monotonicity on the coagulation kernel, which may depend on complex characteristics of the particles.
They also allow singular behaviour in both diffusivity and coagulation rates for small particles.
The general results apply to the Einstein--Smoluchowski model for colloidal particles suspended in a fluid.
\end{abstract}
\vspace{0.2in}


\section{Introduction}\label{I}
In a system of particles, subject both to diffusion and coagulation, we may think of each particle as characterized by a position $x\in\R^d$ and a type $y$
in some auxiliary space $E$.
The type of a particle might simply be its mass, in which case we would take $E=(0,\infty)$.  
In any case, we may suppose that the type of a particle governs both the diffusivity of its position and its tendency to coagulate with other particles.
We will consider deterministic models, 
where the state at a given time $t$ is a measure $\mu_t$ on $\R^d\times E$ describing the distribution of these characteristics of particles in the system.
Our aim is to show, subject to reasonable conditions on the initial state and on the diffusion and coagulation rates, that a natural differential
equation in measures, generalizing Smoluchowski's coagulation equation, has a unique solution, and to show a few properties of solutions, in particular conservation of mass.
The main novelty is our consideration of measure-valued solutions, where prior work has dealt with function-valued solutions, and in allowing the possibility that
coagulation rates may depend on complex characteristics of the particles. 
We are also able to handle some cases of unbounded diffusivity and unbounded coagulation rates for small particles, to which existing works do not apply, but which have some plausible physical relevance.
 
In the rest of this section, we introduce the needed mathematical framework and we set out our assumptions on the rates for coagulation and diffusion.
The notion of measure solution is defined in Section \ref{SOL} and some consequent properties are proved.
We prove in particular a new result showing that the property that solutions conserve total mass is independent of the notion of mass, 
meaning any quantity preserved in individual coagulation events.
Section \ref{HFT} reviews some facts on weak solutions of the heat equation in the context of multi-type diffusion.
Some alternative notions of measure solution are discussed in Section \ref{OS}. 
Related prior work on coagulation-diffusion is discussed in Section \ref{RW}. 
The main result is Theorem \ref{T1}, which gives conditions for existence, uniqueness and mass conservation of measure solutions.
Then Theorem \ref{IFU} shows that our measure solutions in fact give rise to function-valued solutions for suitable initial data.
Finally, Section \ref{BC} discusses an application to one case of physical interest.

Let $(E,{{\cE}})$ be a measurable space on which are given measurable functions $m:E\to (0,\infty)$ and $a: E\to (0,\infty)$,
and let $K$ be a finite kernel on $E\times E\times {{\cE}}$. 
Thus, $K(.,.,A)$ is a finite non-negative measurable function on $E\times E$, for all $A\in\cE$, and $K(y,y',.)$ is a measure on $\cE$ for all $y,y'\in E$. 
We assume that $K$ is symmetric in its first and second arguments. 
We think of $E$ as a set of {\it particle types} and interpret $m(y)$ as the {\it mass} and $a(y)$ as the {\it diffusivity} of a particle of type $y$.
We interpret $K(y,y',dz)$ as the {\em coagulation rate} for the event that a pair of particles of types $y,y'$ combines to become a particle of type $z$.  
We assume that $K$ is {\em mass-preserving}
$$
m = m(y) + m(y'),\q K(y, y',\cdot)\text{-a.e.}
$$
With few exceptions, existing work on coagulation is devoted to the case $E=(0,\infty)$ with $m(y)=y$.  
In this case every mass-preserving kernel has the form $k(y,y')\d_{y+y'}(dz)$ 
for some symmetric measurable function $k$ on $E\times E$, where $\d_y$ is the unit mass at $y$.  
We choose a more general framework to model physical processes where coagulation rates do not depend only on particle masses. 

Write $\cM$ for the set of finite measures $\mu$ on $(\R^d\times E,\cB(\R^d)\otimes\cE)$ 
whose first marginal $B\mapsto\mu(B\times E)$ is absolutely continuous with respect to Lebesgue measure on $\R^d$.
We use $\cM$ as the state-space for our dynamics, interpreting $\mu(B\times A)$ as the number of particles having position in $B$ and type in $A$.
We assume throughout that $(E,\cE)$ is a standard measurable space. 
This is not significantly restrictive for potential applications.
It ensures that, for all $\mu\in\cM$, there exists a kernel $\k$ on $\R^d\times\cE$, such that
$$
\mu(B\times A)=\int_{x\in B}\k(x,A)dx,\q B\in\cB(\R^d),\q A\in\cE.
$$
Moreover, if $\k$ and $\k'$ are both kernels for $\mu$, then the measures $\k(x,.)$ and $\k'(x,.)$ agree for almost all $x\in\R^d$.
We will abuse notation in writing $\mu(x,A)$ for $\k(x,A)$ where the choice of version is unimportant.

For suitable $\mu\in\cM$, we can determine a signed measure $K(\mu)$ on $\R^d\times E$ by
\begin{equation}\label{KMU}
K(\mu)(B\times A)=\frac12\int_{B\times E\times E\times E}\{1_A(z)-1_A(y)-1_A(y')\}K(y,y',dz)\mu(x,dy)\mu(x,dy')dx.
\end{equation}
The signed measure $K(\mu)$ will describe the rate of change in the state $\mu$ due to coagulation.

Let us say that $(\mu_t)_{t<T}$ is a {\em process} in $\cM$ if $\mu_t\in\cM$ for all $t$ and the map $t\mapsto\mu_t(B\times A):[0,T)\to[0,\infty)$ 
is measurable for all $B\in\cB(\R^d)$ and all $A\in\cE$.
We will find conditions under which the equation
\begin{equation}\label{CD}
\dot\mu_t=\tfrac12a\Delta\mu_t+K(\mu_t) 
\end{equation}
suitably interpreted, determines, for some $T\in (0,\infty]$, a unique process $(\mu_t)_{t<T}$ in $\cM$ starting from a given initial measure $\mu_0\in\cM$.
Here, $\Delta$ denotes the usual Laplacian on $\R^d$.
Then $(\mu_t)_{t<T}$ has the interpretation of an evolving cloud of particles in $\R^d$, of various types, 
where a particle of type $y$ diffuses in $\R^d$ at rate $a(y)$, and where two particles at the same
spatial location, of types $y$ and $y'$, coagulate to form a particle of type $z$ at rate $K(y,y',dz)$.
The class of measures $\cM$ is a natural one for this problem. 
In particular, the need to form the product $\mu_t(x,dy)\mu_t(x,dy')$ in $K(\mu_t)$ does not allow us to write an analogous equation for general measures on $\R^d\times E$.

\def\r{\>}
\renewcommand\l{\<}
We now reformulate the equation \eqref{CD} so that it makes sense for any process $(\mu_t)_{t<T}$ in $\cM$.
First, for $\mu\in\cM$, we can determine measures $K^\pm(\mu)$ on $\R^d\times E$ by
\begin{align}\label{K+}
K^+(\mu)(B\times A)
&=\frac12\int_{B\times E\times E}K(y,y',A)\mu(x,dy)\mu(x,dy')dx\\
\label{K-}
K^-(\mu)(B\times A)
&=\int_{B\times A\times E}K(y,y',E)\mu(x,dy)\mu(x,dy')dx.
\end{align}
Provided these measures are finite, we have $K^\pm(\mu)\in\cM$ and $K(\mu)=K^+(\mu)-K^-(\mu)$.
\def\j{
Given a kernel $\mu$ on $\R^d\times {\cE}$ and given $t\in(0,\infty)$, we can define\footnote{
We make several contructions of new kernels from old, all of which can be considered as special cases of the following. 
For $i=1,2,3$, let $(E_i,\cE_i)$ be a measurable space 
and let $\mu$ and $\nu$ be finite kernels on $E_1\times\cE_2$ and $(E_1\times E_2)\times\cE_3$ respectively.
Then there is a unique kernel $\mu\otimes\nu$ on $E_1\times(\cE_2\otimes\cE_3)$ such that, 
for all $x_1\in E_1$, all $A_2\in\cE_2$ and $A_3\in\cE_3$, we have
$$
(\mu\otimes\nu)(x_1,A_2\times A_3)=\int_{x_2\in A_2}\mu(x_1,dx_2)\nu(x_1,x_2,A_3).
$$
We write formally $(\mu\otimes\nu)(x_1,dx_2,dx_3)=\mu(x_1,dx_2)\nu(x_1,x_2,dx_3)$.
We then obtain a further kernel $\mu\circ\nu$ on $E_1\times\cE_3$ by integrating out over $E_2$
$$
(\mu\circ\nu)(x_1,A_3)=(\mu\otimes\nu)(x_1,E_2,A_3).
$$
}
new kernels $K^\pm (\mu)$ and $P_t\mu$ on $\R^d \times {\cE}$ by
\begin{align*}
K^\pm (\mu) (x,dy) & = K^\pm (\mu (x,\cdot ))(dy), \\
P_t\mu (x,dy) & = \int_{\R^{d}} p(a(y)t, x,x') \mu (x',dy) dx'
\end{align*}
}
Next, for $t>0$, define $P_t\mu\in\cM$ by
$$
P_t\mu(B\times A)=\int_{\R^d\times\R^d\times E}\mu(dx,dy)p(a(y)t,x,x')1_B(x')dx'
$$
where $p(t,x,x')= (2\pi t)^{-d/2}\exp\{-|x-x'|^2/2t\}$. 
Then \eqref{CD} is formally equivalent to the following equation in measures on $\R^d\times E$
\begin{equation*}\label{WCDO}
\mu_t+\int^t_0P_{t-s}K^-(\mu_s)ds=P_t\mu_0+\int^t_0P_{t-s}K^+(\mu_s)ds,\q t\in(0,T).
\end{equation*}
By writing in this mild form and by rearranging the non-linear terms, we do not need any assumptions of regularity or integrability
to make sense of the equation.

We now introduce our main assumptions on the diffusivity $a$ and the coagulation kernel $K$.  
First, here is some terminology.  
Say that a function $f$ on $E$ is {\it locally bounded} if it is bounded on $m^{-1}(B)$ for all compact sets $B \sse (0,\infty)$.  
Say that $f$ is $K$-{\it decreasing} if, for all $y, y'\in E$, we have $f \leq f(y)$, $K(y,y',.)$-a.e.  
Say that $f$ is $K$-{\it subadditive} if, for all $y, y'\in E$, we have $f \leq f(y)+f(y')$, $K(y,y',.)$-a.e.  
When $E=(0,\infty)$ and $m(y) = y$, these notions coincide with the usual notions of locally-bounded, non-increasing and subadditive function on $(0,\infty)$.
We choose a continuous function $\phi :(0,\infty)\to (0,\infty)$ such that $\phi(\lambda m)\le\lambda\phi(m)$ for all $\lambda\ge1$ and all $m\in(0,\infty)$.
Set $w(y)=a(y)^{d/2}\phi(m(y))$.  
We will assume throughout:
\begin{align}
&\text{$a$ is locally bounded and $K$-decreasing},\q\text{$a^{-1}$ is locally bounded},\notag\\
&\text{$w$ is uniformly positive},\q K(y,y',E)\leq w(y)w(y'),\q y,y' \in E.
\label{A} 
\end{align}
The assumption that diffusivity decreases on coagulation is physically reasonable. 
The upper bound on $K$ will dictate our choice of $\phi$ and then the sublinearity condition on 
$\phi$ will restrict us to cases where coagulation rates do not increase too rapidly with increasing particle mass.  

\section{Solutions and their properties}\label{SOL}
Let $(\mu_t)_{t<T}$ be a process in $\cM$.
We say that $(\mu_t)_{t<T}$ is a {\it solution}\footnote{This would often be called a mild solution.} to the coagulation-diffusion equation \eqref{CD} if 
\begin{equation}\label{WCD}
\mu_t+\int^t_0P_{t-s}K^-(\mu_s)ds=P_t\mu_0+\int^t_0P_{t-s}K^+(\mu_s)ds,\q t\in(0,T)
\end{equation}
and the following integrability conditions hold: 
\begin{equation}\label{M0}
\int_{\R^d\times E}w(y)\mu_0(dx,dy)<\infty,\q
\mu_0(dx,dy)\le dx\otimes\mu_0^*(dy)
\end{equation}
for some measure $\mu_0^*$ on $E$ for which $w$ is integrable, and
\begin{equation}\label{W}
\int^t_0\int_Ew^R(y)P_{t-s}K^+(\mu_s)(x,dy)ds<\infty,\q\text{a.a. }x\in\R^d,\q R\in(0,\infty),\q t<T
\end{equation}
where
$$
\phi^R(m)=m1_{m\leq  R},\q w^R(y)=a(y)^{d/2}\phi^R(m(y)).
$$

The final condition \eqref{W} is needed to make \eqref{WCD} informative and so offer a chance of proving uniqueness.
Without \eqref{W}, equation \eqref{WCD} might just say `$\infty=\infty$'.
A simpler but stronger condition is to require that that left side of \eqref{W} is integrable over $\R^d$, which can be written
$$
\int^t_0\int_{\R^d\times E\times E\times E}w(z)1_{\{m(z)\le R\}}K(y,y',dz)\mu_s(x,dy)\mu_s(x,dy')dxds<\infty, \q R\in(0,\infty),\q t<T.
$$
Here we used Fubini and integrated out the density function $p(a(y)t,x,x')$.

We say that $(\mu_t)_{t<T}$ is a {\it strong solution}\footnote{This terminology is non-standard -- `strong' here refers mainly to the finite-second-moment-type condition \eqref{S2}, not to 
any additional smoothness.} if \eqref{WCD} and \eqref{M0} hold, together with
\begin{equation}\label{S1}
\sup_{s \leq t} \|\< w, \mu_s \> \|_1 < \infty, \quad t< T,
\end{equation}
and
\begin{equation}\label{S2}
\int^t_0 \|\< w^2, \mu_s\> \|_\infty ds < \infty, \quad t<T. 
\end{equation}
Here $\<w,\mu_s\>$ is the measurable function on $\R^d$ obtained by integrating $w$ with respect to the kernel $\mu_s(x,.)$ over $E$, and $\|.\|_p$ is the $L^p$-norm on $\R^d$.
We will see shortly that these conditions imply (\ref{W}), so a strong solution is indeed a solution.



Note that, for all $t>0$ and $x,x'\in \R^d$, the function
$$
w^{t,x,x'}(y) = w(y) p(a(y)t, x_,x') = \phi (m(y)) (2\pi t)^{-d/2} e^{-|x-x'|^2/2a(y)t}
$$
is $K$-subadditive\footnote{Here we strongly use the explicit Gaussian form of the transition density, 
which appears to rule out an extension of our approach to spatially dependent diffusion. 
On the other hand, a similar inequality does hold for Brownian motion on the torus, by replacing $x'$ by $x'+n$ and summing over $n\in\Z^d$.}
and, for any measure $\mu\in\cE$, the following integral is well-defined and non-positive:
\begin{equation}\label{F}
 {\half} \int_{\R^{d}\times E\times E \times E}
\{w^{t,x,x'} (z)-w^{t,x,x'} (y)-w^{t,x,x'} (y')\}
K(y,y',dz) \mu(x',dy)\mu(x',dy')dx'.
\end{equation}
We will denote this integral by $\<w,P_tK(\mu)\>(x)$, noting that
$$
\<w,P_tK(\mu)\>=\<w,P_tK^+(\mu)\>-\<w,P_tK^-(\mu)\>
$$
whenever the first term on the right is finite.

\begin{proposition}\label{P1}
Suppose $(\mu_t)_{t<T}$ is a solution to \eqref{CD}. 
Then, for all $t<T$, we have
\begin{equation}\label{B1}
\l w, \mu_t\r \leq \l w, P_t \mu_0\r + \int^t_0 \l w, P_{t-s} K(\mu_s)\r ds\q\text{a.e.}
\end{equation}
and hence
$$
\|\<w,\mu_t\>\|_1\le\|\<w,\mu_0\>\|_1<\infty,\q
\|\<w,\mu_t\>\|_\infty\le\<w,\mu_0^*\><\infty.
$$
In particular, \eqref{S1} holds and
\begin{equation}\label{M1}
\sup_{s \leq t <T} \|\l 1, P_{t-s} K^+ (\mu_s)\r \|_1 \ < \infty.
\end{equation}
\end{proposition}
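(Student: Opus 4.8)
The plan is to establish the integral inequality \eqref{B1} by testing the mild form \eqref{WCD} against the weight $w$, and then to read off the four consequences. The key structural input is the observation recorded just above the proposition: for each $t>0$ and $x,x'$ the function $w^{t,x,x'}(y)=w(y)p(a(y)t,x,x')$ is $K$-subadditive, so that the net coagulation contribution $\langle w,P_{t-s}K(\mu_s)\rangle$ is a well-defined element of $[-\infty,0]$ given by the single non-positive integral \eqref{F}. Formally, pairing \eqref{WCD} with $w$ and using $K(\mu_s)=K^+(\mu_s)-K^-(\mu_s)$ gives $\langle w,\mu_t\rangle=\langle w,P_t\mu_0\rangle+\int_0^t\langle w,P_{t-s}K(\mu_s)\rangle\,ds$, in which the coagulation term on the right is $\le 0$.

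The difficulty is that the gain and loss terms $\langle w,P_{t-s}K^\pm(\mu_s)\rangle$ may individually be infinite, so this rearrangement is not literally justified and threatens an $\infty-\infty$. I would handle this by approximation from below. Choose weights $w_n\uparrow w$ whose associated kernels $w_n^{t,x,x'}$ remain $K$-subadditive (truncating $\phi$, or restricting to $m\le R$, preserves subadditivity, since $\phi$ is subadditive and $a$ is $K$-decreasing), and for which the gain integral $\int_0^t\langle w_n,P_{t-s}K^+(\mu_s)\rangle(x)\,ds$ is finite for a.a.\ $x$; this finiteness is exactly what condition \eqref{W} is designed to supply. For each fixed $n$ the pairing of \eqref{WCD} with $w_n$ is then an identity between finite quantities, and rearranging, together with the $K$-subadditivity of $w_n^{t-s,x,x'}$, yields $\langle w_n,\mu_t\rangle\le\langle w_n,P_t\mu_0\rangle+\int_0^t\langle w_n,P_{t-s}K(\mu_s)\rangle\,ds$ for a.a.\ $x$.

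Letting $n\to\infty$, monotone convergence gives $\langle w_n,\mu_t\rangle\uparrow\langle w,\mu_t\rangle$ and $\langle w_n,P_t\mu_0\rangle\uparrow\langle w,P_t\mu_0\rangle$, while for the coagulation term the integrands converge pointwise and stay $\le 0$, so reverse Fatou bounds $\limsup_n\int_0^t\langle w_n,P_{t-s}K(\mu_s)\rangle\,ds$ above by $\int_0^t\langle w,P_{t-s}K(\mu_s)\rangle\,ds$; combining these proves \eqref{B1}. The four consequences are then routine. Dropping the non-positive integral in \eqref{B1} gives $\langle w,\mu_t\rangle\le\langle w,P_t\mu_0\rangle$ a.e. Integrating over $\R^d$ and using that the Gaussian has unit mass in $x'$ gives $\|\langle w,\mu_t\rangle\|_1\le\int_{\R^d\times E}w(y)\,\mu_0(dx,dy)=\|\langle w,\mu_0\rangle\|_1<\infty$ by \eqref{M0}; using instead $\mu_0(dx,dy)\le dx\otimes\mu_0^*(dy)$ gives the pointwise bound $\langle w,P_t\mu_0\rangle(x)\le\langle w,\mu_0^*\rangle<\infty$, hence the $L^\infty$ estimate. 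Inequality \eqref{S1} is immediate. Finally \eqref{M1} follows since $\|\langle 1,P_{t-s}K^+(\mu_s)\rangle\|_1=K^+(\mu_s)(\R^d\times E)=\tfrac12\int_{\R^d\times E\times E}K(y,y',E)\mu_s(x,dy)\mu_s(x,dy')\,dx\le\tfrac12\|\langle w,\mu_s\rangle\|_\infty\|\langle w,\mu_s\rangle\|_1$, using $K(y,y',E)\le w(y)w(y')$ from \eqref{A}, which is bounded uniformly in $s<t$ by the two estimates just obtained.

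The main obstacle is the rearrangement step: making precise the cancellation between the possibly infinite gain and loss terms. This is exactly where the $K$-subadditivity of $w^{t,x,x'}$ and the integrability hypothesis \eqref{W} do the work, the former guaranteeing that the net coagulation term is well-defined and signed, the latter supplying the finiteness needed to rearrange at each truncation level; the remaining care lies in interchanging the limit $n\to\infty$ with the time integral, for which the one-sided bound and reverse Fatou suffice.
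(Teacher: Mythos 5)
Your argument is the paper's argument: approximate $w$ from below by $K$-subadditive weights whose gain terms are finite, rearrange the mild equation \eqref{WCD} at each truncation level, and pass to the limit using Fatou applied to the non-positive net coagulation integrand of the form \eqref{F}; the derivation of the four consequences is also the same.

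The one step that needs repair is the choice of approximating weights. Condition \eqref{W} controls integrals of the gain term against $w^R(y)=a(y)^{d/2}m(y)1_{m(y)\le R}$, i.e.\ against the \emph{linear} truncation $\phi^R(m)=m1_{m\le R}$, whereas the truncations you suggest in passing ($\phi 1_{m\le R}$, or a cap on $\phi$) still behave like $\phi$ near $m=0$. Since $\phi$ is only assumed sublinear, it may satisfy $\phi(m)/m\to\infty$ as $m\to0$ (as it does in the Einstein--Smoluchowski application, where $\phi(m)=m^{1/6}+m^{5/6}$), so these truncations are not dominated by any multiple of $w^R$ and \eqref{W} does not supply the finiteness you need to justify the rearrangement at level $n$. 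The paper's choice
$\phi_n(m)=(m1_{m\le n^{-1}})\,n\phi(n^{-1})+1_{n^{-1}<m\le n}\phi(m)$
fixes this by replacing $\phi$ with a linear function below $n^{-1}$: it is still subadditive (so $w_n^{t,x,x'}$ remains $K$-subadditive and the net integrand remains non-positive), still increases to $\phi$, and now satisfies $w_n\le C_nw^n$ for a constant $C_n$, so \eqref{W} applies. With that substitution your proof goes through exactly as written: monotone convergence for $\langle w_n,\mu_t\rangle$ and $\langle w_n,P_t\mu_0\rangle$, reverse Fatou for the non-positive term, and the concluding bound $\|\langle 1,P_{t-s}K^+(\mu_s)\rangle\|_1\le\tfrac12\|\langle w,\mu_s\rangle\|_1\,\|\langle w,\mu_s\rangle\|_\infty\le\tfrac12\|\langle w,\mu_0\rangle\|_1\langle w,\mu_0^*\rangle$, uniform in $s\le t<T$.
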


\begin{proof}
Define $w_n(y)=a(y)^{d/2}\phi_n(m(y))$, where $\phi_n$ is the sublinear function
$$
\phi_n (m) = (m1_{m \leq n^{-1}}) n \phi( n^{-1}) + 1_{n^{-1} < m \leq n} \phi(m).
$$
Then $w_n\le w^n$ and $w_n\uparrow w$ on $E$ as $n\to\infty$.  
By \eqref{W} we know that, for all $n\in\N$, all $t<T$, we have
$$
\int^t_0 \l w_n, P_{t-s} K^+ (\mu_s)\r ds < \infty\q\text{a.e.}
$$
so we can multiply \eqref{WCD} by $w_n$, integrate over $E$ and rearrange to obtain
$$
\l w_n, \mu_t \r =
\l w_n, P_t \mu_0\r +
\int^t_0 \l w_n, P_{t-s} K(\mu_s) \r ds\q\text{a.e.}
$$
Now write $\l w_n, P_{t-s} K(\mu_s)\r$ as an integral over $\R^d\times E \times E \times E$ as in \eqref{F} 
and pass to the limit using Fatou's lemma to obtain \eqref{B1}. 
From \eqref{B1} we deduce that $\l w, \mu_t\r \leq \l w, P_t\mu_0\r$ almost everywhere, 
so $\|\l w,\mu_t\r\|_1 \leq \|\l w, P_t \mu_0\r \|_1 = \|\l w, \mu_0\r \|_1<\infty$ 
and $\|\l w,\mu_t\r\|_\infty \leq \|\l w, P_t \mu_0\r \|_\infty \le\<w,\mu_0^*\><\infty$ for all $t<T$.
Then \eqref{M1} follows from
\begin{align*}
\|  \l 1, &P_{t-s} K^+ (\mu_s)\r \|_1 
= \|\l 1, K^+ (\mu_s)\r\|_1
=\half \| \l 1, K^- (\mu_s)\r \|_1\\
& = \int_{\R^{d}\times E \times E} K(y,y',E) \mu_s (x,dy) \mu_s (x,dy') \, dx\\
&\leq  \| \l w, \mu_s\r^2\|_1 
\leq  \| \l w, P_s\mu_0\r^2\|_1 
\le\|\l w, \mu_0 \r \|_1\l w, \mu^*_0 \r < \infty.
\end{align*}
\end{proof}

\begin{proposition}
Suppose $(\mu_t)_{t<T}$ is a strong solution to \eqref{CD}. 
Then, for all $t<T$, we have
\begin{equation}
\int^t_0 \|\l w, P_{t-s} K^+ (\mu_s)\r \|_1ds < \infty\label{MW}
\end{equation}
so \eqref{W} holds and
\begin{equation}
\l w,\mu_t\r = \l w, P_t \mu_0 \r +
\int^t_0 \l w, P_{t-s} K(\mu_s)\r ds\q\text{a.e}.\label{BE}
\end{equation}
\end{proposition}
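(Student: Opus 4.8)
The goal is to upgrade the inequality \eqref{B1} from Proposition \ref{P1} to the equality \eqref{BE}, and to do this I would show that the strong-solution hypotheses \eqref{S1} and \eqref{S2} give enough control to justify passing to the limit in $w_n$ with equality rather than merely Fatou's inequality. The plan is to return to the exact identity obtained in the proof of Proposition \ref{P1}, namely
$$
\l w_n, \mu_t \r = \l w_n, P_t \mu_0\r + \int^t_0 \l w_n, P_{t-s} K(\mu_s) \r ds \q\text{a.e.},
$$
and control the convergence of each term as $n\to\infty$. The left side and the first right-hand term converge by monotone convergence since $w_n\uparrow w$. The content is entirely in the integral term, where $\l w_n, P_{t-s}K(\mu_s)\r$ splits as a difference of a positive ($K^+$) and negative ($K^-$) part, so dominated/monotone convergence cannot be applied naively to the signed integrand.

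The key step is to prove \eqref{MW}: the $K^+$ contribution is integrable. Once \eqref{MW} is in hand, $\l w, P_{t-s}K^+(\mu_s)\r$ is finite for a.a.\ $s$ and a.a.\ $x$, so \eqref{W} holds (this is the promised implication that a strong solution is a solution), and then both $\l w_n, P_{t-s}K^+(\mu_s)\r$ and $\l w_n, P_{t-s}K^-(\mu_s)\r$ increase monotonically to their integrable limits; monotone convergence applied separately to each, followed by subtraction, yields \eqref{BE}. To establish \eqref{MW} I would bound $\l w, P_{t-s}K^+(\mu_s)\r$ pointwise. Writing $K^+$ as in \eqref{K+} and integrating the heat kernel, the integrand involves $w(z)\le w(y)+w(y')$ on the support of $K(y,y',dz)$ by $K$-subadditivity; combined with the bound $K(y,y',E)\le w(y)w(y')$ from \eqref{A}, this produces a pointwise estimate of the form
$$
\l w, P_{t-s}K^+(\mu_s)\r(x) \le C\,\l w^2,\mu_s\r(x)\,\l w,\mu_s\r(x)
$$
up to the Gaussian smoothing, after using $w\le w^2/\min w$ or symmetrizing the $w(y)+w(y')$ term against $w(y)w(y')$.

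Taking the $L^1(\R^d)$ norm and applying the natural Hölder split, I would bound $\|\l w, P_{t-s}K^+(\mu_s)\r\|_1$ by a constant times $\|\l w^2,\mu_s\r\|_\infty\,\|\l w,\mu_s\r\|_1$ (the heat semigroup $P_{t-s}$ being an $L^1$-contraction in the spatial variable). The first factor is integrable in $s$ on $[0,t)$ by the strong-solution condition \eqref{S2}, and the second is uniformly bounded by \eqref{S1} (equivalently by Proposition \ref{P1}). Integrating in $s$ then gives \eqref{MW}. The main obstacle is the bookkeeping in this pointwise bound: one must apply $K$-subadditivity of the function $w^{t,x,x'}$ correctly so that the Gaussian weights are handled honestly rather than discarded prematurely, and must choose the Hölder exponents so that exactly the two strong-solution moment conditions \eqref{S1} and \eqref{S2} appear. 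Everything after \eqref{MW} is a routine monotone-convergence argument on the two one-signed pieces.
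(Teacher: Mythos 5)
Your proposal is correct and follows essentially the same route as the paper's proof: dominate $\<w,P_{t-s}K^+(\mu_s)\>$ by $\<w,P_{t-s}K^-(\mu_s)\>$ using the $K$-subadditivity of $w^{t,x,x'}$, integrate out the Gaussian to reduce to $\|\<w,K^-(\mu_s)\>\|_1$, apply $K(y,y',E)\le w(y)w(y')$ and the $L^1$--$L^\infty$ H\"older split to invoke \eqref{S1} and \eqref{S2}, and then obtain \eqref{BE} by integrating $w$ against \eqref{WCD} (your monotone-convergence passage through $w_n$ is equivalent to this). The only point you gloss over is why \eqref{MW} implies \eqref{W}: one needs $w^R\le\ve^{-1}w$ for some $\ve=\ve(R)>0$, which the paper gets from $\phi(m)\ge m\phi(R)/R$ for $m\le R$, a consequence of the sublinearity assumption on $\phi$.
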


\begin{proof}
We have
\begin{align*}
\| \l w, P_{t-s} &K^+ (\mu_s)\r \|_1 
\leq \|\l w, P_{t-s} K^- (\mu_s)\r \|_1
=\|\l w, K^- (\mu_s)\r \|_1\\
&=\int_{\R^{d}\times E \times E} w(y) K(y,y',E)\mu_s(x,dy)\mu_s(x,dy')dx\\
&\leq\|\l w, \mu_s \r \l w^2, \mu_s \r \|_1 
\leq \| \l w, \mu_s \r\|_1 \, \|\l w^2, \mu_s\r \|_\infty.
\end{align*}
So \eqref{MW} follows from \eqref{S1} and \eqref{S2}.

Now \eqref{W} holds because, for all $R<\infty$, for some $\ve>0$, $\ve m1_{m \leq R} \leq \phi (m)$ for all $m\in (0,\infty)$.  
On multiplying \eqref{WCD} by $w$ and integrating over $E$, all terms are integrable over $\R^d$, hence finite almost everywhere.  
On rearranging we obtain \eqref{BE}.
\end{proof}

\begin{proposition}
Let $(\mu_t)_{t<T}$ be a solution to \eqref{CD}.  Then 
$\|\l m, \mu_t\r \|_1$ is non-increasing in $t$.
\end{proposition}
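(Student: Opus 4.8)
The plan is to follow the strategy of Proposition \ref{P1}, but now integrating over all of $\R^d$ and working with a truncated mass in place of $w$. Write $M_t=\|\l m,\mu_t\r\|_1=\int_{\R^d\times E}m(y)\mu_t(dx,dy)$ and, for $R\in(0,\infty)$, set $m_R=m\wedge R$ and $M_t^R=\|\l m_R,\mu_t\r\|_1$. Two facts drive the argument. First, the heat flow conserves total mass: $\int_{\R^d\times E}m_R\,dP_\tau\nu=\int_{\R^d\times E}m_R\,d\nu$ for $\nu\in\cM$, since $\int_{\R^d}p(a(y)\tau,x,x')dx'=1$. Second, $m_R$ is $K$-subadditive in the strong sense that $m_R(z)\le m_R(y)+m_R(y')$, $K(y,y',\cdot)$-a.e.; this is immediate from mass-preservation $m(z)=m(y)+m(y')$ and the elementary inequality $(s+s')\wedge R\le(s\wedge R)+(s'\wedge R)$.

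First I would secure finiteness. As $w$ is uniformly positive, say $w\ge c>0$, and $m_R\le R$, we have $\l m_R,\mu_t\r\le Rc^{-1}\l w,\mu_t\r$, so Proposition \ref{P1} gives $M_t^R\le Rc^{-1}\|\l w,\mu_0\r\|_1<\infty$. The gain and loss terms are controlled the same way: since $m_R\le R$,
$$
\int_{\R^d\times E}m_R\,dK^\pm(\mu_s)\le R\,\|\l 1,K^\pm(\mu_s)\r\|_1,
$$
and these are bounded uniformly in $s\le t$ by \eqref{M1} for $K^+$, and hence also for $K^-$ because $\|\l 1,K^-(\mu_s)\r\|_1=2\|\l 1,K^+(\mu_s)\r\|_1$. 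Thus $\int^t_0\int_{\R^d\times E}m_R\,dK^\pm(\mu_s)\,ds<\infty$.

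Next I would integrate \eqref{WCD} against $m_R$ over $\R^d\times E$. All integrands are non-negative, so Tonelli justifies exchanging the time integral with the integral against $m_R$; discarding the heat flow by mass-conservation of $P_\tau$ and using the finiteness just established to rearrange legitimately, I obtain
$$
M_t^R=M_0^R+\int^t_0\int_{\R^d\times E}m_R\,dK(\mu_s)\,ds.
$$
By the $K$-subadditivity of $m_R$ and the symmetry of $K$ and of the product $\mu_s(x,dy)\mu_s(x,dy')$,
$$
\int_{\R^d\times E}m_R\,dK(\mu_s)=\half\int_{\R^{d}\times E\times E\times E}\{m_R(z)-m_R(y)-m_R(y')\}K(y,y',dz)\mu_s(x,dy)\mu_s(x,dy')dx\le0.
$$
Hence $M_t^R$ is $M_0^R$ plus the time-integral of a non-positive integrand, so $t\mapsto M_t^R$ is non-increasing. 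Finally, letting $R\to\infty$, monotone convergence gives $m_R\uparrow m$ and $M_t^R\uparrow M_t$ for each $t$, and an increasing pointwise limit of non-increasing functions is non-increasing, which is the claim.

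The main obstacle is the rearrangement in the identity for $M_t^R$: one must rule out an `$\infty-\infty$' cancellation between the gain and loss contributions, and it is precisely here that the truncation is essential, since it reduces the required control to the total-number estimate \eqref{M1} of Proposition \ref{P1}, available without any additional moment assumption. The reason the result is only a monotonicity statement rather than exact conservation is that, for finite $R$, the truncated mass genuinely decreases whenever a particle of mass exceeding $R$ is produced, and this deficit need not vanish in the limit: mass may be lost to `infinity' in the type variable.
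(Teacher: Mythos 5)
Your proof is correct and follows essentially the same route as the paper: truncate the mass (the paper uses $m1_{m\le R}$ where you use $m\wedge R$, both of which are $K$-subadditive and increase to $m$), integrate the mild equation \eqref{WCD} against the truncated mass using the conservation of the heat flow and the finiteness supplied by Proposition \ref{P1}, conclude that the coagulation term is non-positive, and pass to the limit by monotone convergence. The extra detail you give on ruling out the $\infty-\infty$ cancellation is exactly what the paper compresses into ``with all terms finite by Proposition \ref{P1}''.
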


\begin{proof} Fix $R<\infty$. Multiply \eqref{WCD} by $m1_{m
\leq R}$ and integrate over $\R^d \times E$ to obtain
\begin{align*}
\|\l m1_{m\leq R},& \, \mu_t \r \|_1 + \int^t_0 \| \l m1_{m\leq R}, \ K^-
(\mu_s)\r \|_1 \, ds \\
&= \| \l m1_{m\leq R}, \, \mu_0 \r \|_1 + \int^t_0 \| \l m1_{m \leq R},
K^+ (\mu_s) \r \|_1 \, ds,
\end{align*}
with all terms finite by Proposition \ref{P1}.  
Since $m1_{m \leq R}$ is $K$-subadditive,
$$
\<m1_{m\le R},K^+(\mu_s)\>\le\<m1_{m\le R},K^-(\mu_s)\>
$$
so
$$
\|\l m1_{m\leq R}, \ \mu_t \r \|_1 \leq \| \l m1_{m\leq R}, \mu_0 \r \|_1,
$$
and the claim follows by monotone convergence.
\end{proof}

Let us call a measurable function $n:E\to(0,\infty)$ a {\em mass function for $K$} if
$$
n=n(y)+n(y'),\q K(y,y',\cdot)\text{-a.e.},\q y,y'\in E.
$$
In particular, $m$ is a mass function, and has been given a special role in the discussion above.
However, it is possible that $K$ may have more than one conserved quantity. For example, the type of a particle
may determine the number of initial particles present, which is then a mass function for $K$.
The proof just
given shows that $\|\l n, \mu_t\r \|_1$ is non-increasing in $t$ for any mass function $n$.
Let us say that a solution $(\mu_t)_{t<T}$ to \eqref{CD} {\it conservative} if $\|\l m, \mu_t\r \|_1=\|\l m, \mu_0\r \|_1<\infty$
for all $t<T$. 

\begin{proposition}
Let $(\mu_t)_{t<T}$ be a solution to \eqref{CD} with $\|\l m, \mu_0\r \|_1<\infty$.
Let $n$ be a mass function for $K$ with $\|\l n, \mu_0\r \|_1<\infty$.
Then $(\mu_t)_{t<T}$ is conservative if and only if it is {\em $n$-conservative}, that is to say if
$\|\l n, \mu_t\r \|_1=\|\l n, \mu_0\r \|_1$ for all $t<T$.
\end{proposition}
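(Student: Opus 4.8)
The plan is to show, for each fixed $t<T$, that the two mass deficits
$$D_m(t)=\|\l m,\mu_0\r\|_1-\|\l m,\mu_t\r\|_1,\q D_n(t)=\|\l n,\mu_0\r\|_1-\|\l n,\mu_t\r\|_1$$
vanish together. Both are well defined, non-negative and non-decreasing in $t$: this is exactly the content of the previous proposition applied to the mass functions $m$ and $n$, each of which has finite initial first moment by hypothesis. Since $(\mu_t)$ is conservative iff $D_m\equiv 0$ and $n$-conservative iff $D_n\equiv 0$, it suffices to prove $D_m(t)=0\iff D_n(t)=0$ for every $t$.

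First I would promote the monotonicity estimate to an exact identity. For any mass function $f$ and any $R<\infty$ the truncation $f1_{m\le R}$ is $K$-subadditive, and its defect in additivity is supported where the product is large: since $m(z)=m(y)+m(y')$ and $f(z)=f(y)+f(y')$ hold $K(y,y',\cdot)$-a.e.,
$$f(y)1_{m(y)\le R}+f(y')1_{m(y')\le R}-f(z)1_{m(z)\le R}=\big(f(y)1_{m(y)\le R}+f(y')1_{m(y')\le R}\big)1_{m(z)>R}.$$
Multiplying \eqref{WCD} by $f1_{m\le R}$, integrating over $\R^d\times E$ and rearranging as in the preceding proofs — all terms being finite by Proposition \ref{P1} and \eqref{W} — yields
$$\|\l f1_{m\le R},\mu_0\r\|_1-\|\l f1_{m\le R},\mu_t\r\|_1=\int_{B_R}f(y)\,d\L,\q B_R=\{m(y)\le R<m(z)\},$$
where $\L(dx,dy,dy',dz)=\int_0^t K(y,y',dz)\,\mu_s(x,dy)\,\mu_s(x,dy')\,dx\,ds$ is a finite measure on $\R^d\times E^3$, finiteness coming from \eqref{M1}. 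Letting $R\to\infty$ and using monotone convergence on the left gives the representation $D_f(t)=\lim_{R\to\infty}\int_{B_R}f(y)\,d\L$ for $f\in\{m,n\}$.

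It remains to show these two limits vanish together, and this is the heart of the matter. Because $B_R\sse\{m(z)>R\}$ and $\{m(z)>R\}\downarrow\es$, the finite measure $\L$ gives vanishing mass to $B_R$, so each deficit is purely a failure of uniform integrability of $m(y)$, respectively $n(y)$, over the shrinking sets $B_R$. A first, easy observation is that $\int f(y)\,d\L<\infty$ already forces $D_f=0$ by dominated convergence; equivalently, using additivity and the $y\leftrightarrow y'$ symmetry of $\L$, one has $D_f\le\half\lim_R\int_{\{m(z)>R\}}f(z)\,d\L$, which is $0$ whenever the $f$-weighted coagulation measure is finite. The real difficulty is the gelling regime, where these integrals diverge and $m,n$ are \emph{not} comparable pointwise, so neither flux dominates the other term by term. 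To handle it I would exploit the coupling recorded by $\L$: on $B_R$ one has $m(y)\le R<m(y)+m(y')$, so splitting according to whether $m(y)>R/2$ (the consumed particle is itself large) or $m(y)\le R/2$ (which forces $m(y')>R/2$), and using $K(y,y',E)\le w(y)w(y')$ with the $L^1\cap L^\infty$ control of $\l w,\mu_s\r$ from Proposition \ref{P1}, one bounds each flux by time-integrals of a tail $\l w1_{m>R/2},\mu_s\r$ against a companion factor. The main obstacle is that such bounds involve mixed moments like $\l fw,\mu_s\r$ which are not controlled a priori, so the vanishing of one deficit must be fed back to bound the other rather than used as a one-sided domination. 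I expect the clean formulation is to characterise $D_f(t)=0$ by a single condition on $\L$ that is symmetric in the choice of mass function and then transfer it between $m$ and $n$ using only the additivity $f(z)=f(y)+f(y')$ and the finiteness of $\|\l f,\mu_0\r\|_1$; the identity $D_{m+n}=D_m+D_n$, immediate from linearity since all terms are finite, then lets me reduce to a single implication and argue symmetrically.
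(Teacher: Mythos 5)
Your setup is sound as far as it goes: the deficits $D_m$, $D_n$ are well defined and non-decreasing, the flux representation $D_f(t)=\lim_{R\to\infty}\int_{B_R}f(y)\,d\Lambda$ is essentially the right bookkeeping, and you have correctly diagnosed both that finiteness of $\Lambda$ alone does not close the argument and that $m$ and $n$ admit no pointwise comparison. But the proof stops exactly where the proposition becomes non-trivial: the transfer of $D_m(t)=0$ to $D_n(t)=0$ is only described as something you ``would'' do and ``expect'' to work, with the obstacle (uncontrolled mixed moments such as $\<nw,\mu_s\>$) acknowledged rather than overcome. As written this is not a proof of the statement.

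The missing idea, which is how the paper closes the gap, is to compare $m$ not with $n$ but with the capped functions $f=(n/N)\wedge m$, $N\in\N$. Each such $f$ satisfies $f\le m$ and is $K$-super-additive (i.e.\ $f\ge f(y)+f(y')$, $K(y,y',\cdot)$-a.e.), being a minimum of two additive functions. For the truncations $f_R=f1_{m\le R}$ and $m_R=m1_{m\le R}$ (both bounded by $R$, so all rearrangements of \eqref{WCD} are legitimate) one checks the pointwise inequality $\{f_R\}(y,y',z)\le\{m_R\}(y,y',z)$ between additivity defects, whence $\|\<f_R,\mu_0\>\|_1-\|\<f_R,\mu_t\>\|_1\le\|\<m_R,\mu_0\>\|_1-\|\<m_R,\mu_t\>\|_1$; letting $R\to\infty$ gives $\|\<f,\mu_t\>\|_1\ge\|\<f,\mu_0\>\|_1$ whenever the solution is conservative. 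This is precisely the term-by-term domination you believed was unavailable --- it becomes available once $n$ is replaced by its super-additive minorants $n\wedge(Nm)$. Since $m>0$ everywhere, $n\wedge(Nm)\ua n$, and monotone convergence in $N$ yields $\|\<n,\mu_t\>\|_1\ge\|\<n,\mu_0\>\|_1$; the reverse inequality is the preceding proposition applied to $n$, and the converse implication is symmetric in $m$ and $n$. A secondary flaw in your write-up: the exact identity for $D_n$ obtained by pairing \eqref{WCD} with $n1_{m\le R}$ is not justified, because $n1_{m\le R}$ need not be bounded ($n$ may be large where $m$ is small, and condition \eqref{W} controls $w^R$, not $n$), so even your flux representation for $f=n$ already requires a cap of the kind the paper introduces.
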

\begin{proof} 
Suppose that $(\mu_t)_{t<T}$ is conservative and that $f$ is a non-negative measurable function on $E$
such that $f\le m$ and 
$$
f\ge f(y)+f(y'),\q K(y,y',\cdot)\text{-a.e.},\q y,y'\in E.
$$
Fix $R\in[0,\infty)$ and set $m_R(y)=m(y)1_{m(y)\le R}$ and $f_R(y)=f(y)1_{m(y)\le R}$. 
Write $\{f\}(y,y',z)$ for $f(y)+f(y')-f(z)$. It is straightforward to check that
$\{f_R\}(y,y',z)\le\{m_R\}(y,y',z)$ for $K(y,y',.)$-almost all $z$, for all $y,y'$.
Moreover $m_R$ and $f_R$ are both bounded, so
\begin{align*}
&\|\l f_R,\mu_0\r\|_1-\|\l f_R,\mu_t\r\|_1\\
&\q\q=\frac12\int_0^t\int_{\R^d\times E\times E\times E}\{f_R\}(y,y',z)K(y,y',dz)\mu_s(x,dy)\mu_s(x,dy')dxds\\
&\q\q\le\frac12\int_0^t\int_{\R^d\times E\times E\times E}\{m_R\}(y,y',z)K(y,y',dz)\mu_s(x,dy)\mu_s(x,dy')dxds\\
&\q\q=\|\l m_R,\mu_0\r\|_1-\|\l m_R,\mu_t\r\|_1.
\end{align*}
On letting $R\to\infty$, we see that $\|\<f,\mu_t\>\|_1\ge\|\<f,\mu_0\>\|_1$.

For each $N\in\N$, the preceding argument may be applied with $f=(n/N)\wedge m$ to show that $\|\<n\wedge(Nm),\mu_t\>\|_1\ge\|\<n\wedge(Nm),\mu_0\>\|_1$.
Since $m$ is positive, we obtain $\|\<n,\mu_t\>\|_1\ge\|\<n,\mu_0\>\|_1$ on letting $N\to\infty$.
Hence $(\mu_t)_{t<T}$ is $n$-conservative.
The same argument shows that $n$-conservativity implies conservativity.
\def\j{
For general $n$, by enriching the type space if necessary,
we may assume\footnote{Write $\tilde E$ for the subset of those $(y,\t)\in E\times\T(E)$ with $m(y)=m(\t)$ and $n(y)=n(\t)$
where we have extended $m$ to $\T(E)$ using $m(\{\t_1,\t_2\})=m(\t_1)+m(\t_2)$, and similarly $n$. Put $\mu_0$ on the diagonal 
in $E\times\T_1(E)=E\times E$.
Replace the coagulation event $(y,y')\ra z$ by $((y,\t),(y',\t'))\ra(z,\{\t,\t'\})$. Define $m$ and $n$ on $\tilde E$ in the obvious way.
Given any positive measurable function $f$ on $E=\T_1(E)$, we can extend $f$ to a mass function on $\T(E)$ using $f(\{\t_1,\t_2\})=f(\t_1)+f(\t_2)$.
Then define $f(y,\t)=f(\t)$, a mass function on $\tilde E$. Apply this construction with $f=Nm\wedge n$ to obtain $m_N$.
Then $m_N$ has the claimed properties. Given a solution $(\mu_t)_{t<T}$ to \eqref{CD}, it is straightforward to construct a
solution $(\tilde\mu_t)_{t<T}$ to the lifted coagulation equation in $\tilde E$, which is conservative if and only if $(\mu_t)_{t<T}$ is so,
and is $n$-conservative if and only if $(\mu_t)_{t<T}$ is so.}
there exists a sequence $(\tilde m_N:n\in\N)$ of mass functions for $K$
such that $\tilde m_N\le Nm$ for all $N$ and $\tilde m_N\ua n$ as $N\to\infty$.
Then $(\mu_t)_{t<T}$ is $\tilde m_N$-conservative for all $N$ by the preceding
argument, and it follows by monotone convergence that $(\mu_t)_{t<T}$ is $n$-conservative.
}
\end{proof}

\section{Heat flow with types}\label{HFT}
We discuss briefly the propagators associated to the time-dependent and type-dependent differential operator on $\R^d$
$$
\cL_t=\tfrac12a(y)\Delta+g_t(.,y).
$$
Here, for simplicity, we will assume that the diffusivity $a$ is measurable and satisfies
\begin{equation}\label{AU}
\inf_{y\in E}a(y)>0,\q\sup_{y\in E}a(y)<\infty
\end{equation} 
and that $g=(g_t)_{t\ge0}$ is a process of measurable functions on $\R^d\times E$ such that
\begin{equation}\label{GU}
\|g\|_\infty:=\sup_{t\ge0,\,y\in E}\|g_t(.,y)\|_\infty<\infty.
\end{equation} 
A generalization to the case where these conditions hold on $E_n$ for some measurable sets $E_n\ua E$ is obvious.
The lines of the discussion are standard, but it will serve to introduce notation and to check its applicability in the time-dependent and type-dependent case.
For $0\le s<t$, for $x,x'\in\R^d$ and for $a\in(0,\infty)$, write $\b_a^{s,x;t,x'}$ for the Borel measure on the set of continuous paths $C([s,t],\R^d)$ 
which is the law of a Brownian bridge of diffusivity $a$, starting from $x$ at time $s$ and ending at $x'$ at time $t$.
Note that
\begin{equation}\label{BBM}
\b_a^{s,x;t,x'}=\b_1^{0,0;1,0}\circ\phi^{-1}
\end{equation}
where $\phi:C([0,1],\R^d)\to C([s,t],\R^d)$ is given by
$$
\phi(w)((1-\t)s+\t t)=x+\sqrt{a}w(\t)+\t(y-x),\q 0\le\t\le1.
$$
Define
$$
p_y(s,x;t,x')=p(a(y)(t-s),x,x')\pi_y(s,x;t,x')
$$
where
$$
\pi_y(s,x;t,x')=\int_{C([s,t],\R^d)}\exp\left\{\int_s^tg_\t(w_\t,y)d\t\right\}\b_{a(y)}^{s,x;t,x'}(dw).
$$
Note that $\pi_y(s,x;t,x')\le e^{(t-s)\|g\|_\infty}$.
The function $p$ is jointly measurable in all variables. This can be seen using \eqref{BBM}.
Define the propagators $(P_{st}:0\le s<t)$ on bounded measurable functions on $\R^d\times E$ and $(P_{ts}:0\le s<t)$ on signed measures on $\R^d\times E$ of finite total variation by
$$
P_{st}f(x,y)=\int_{x'\in\R^d}p_y(s,x;t,x')f(x',y)dx',\q
P_{ts}\mu(dx',dy)=\int_{x\in\R^d}\mu(dx,dy)p_y(s,x;t,x')dx'.
$$
It will be convenient to agree also that $P_{ss}f=f$ and $P_{ss}\mu=\mu$ for all $s$.
Write $(f,\mu)$ for the integral $\int_{\R^d\times E}f(x,y)\mu(dx,dy)$.
By Fubini, we have $(P_{st}f,\mu)=(f,P_{ts}\mu)$ for all $f$ and $\mu$. 
By the Markov property of Brownian motion, we have $P_{st}\circ P_{tu}=P_{su}$ for $0\le s<t<u$.
Also, $P_{st}f(x,y)=\E(f(B_t,y)Z_t)$, where $(B_t)_{t\ge s}$ is a Brownian motion in $\R^d$ of diffusivity $a(y)$, starting from $x$ at time $s$, 
and where $Z_t=\exp\int_s^tg_\t(B_\t,y)d\t$.
Write $\cF$ for the set of all bounded measurable functions on $\R^d\times E$ which are twice continuously differentiable along $\R^d$ 
with bounded first and second derivatives.
For $f\in\cF$, by It\^o's formula,
$$
d(f(B_t,y)Z_t)=\nabla f(B_t,y)Z_tdB_t+\left(\tfrac12a(y)\Delta f(B_t,y)+g_t(B_t,y)f(B_t,y)\right)Z_tdt
$$
so we obtain, on taking expectations,
\begin{equation}\label{PSF}
P_{st}f(x,y)=f(x,y)+\int_s^tP_{s\t}\cL_\t f(x,y)d\t.
\end{equation}
We will write $\|\mu\|_1$ for the total variation of a signed measure $\mu$ on $\R^d\times E$.

\begin{proposition}\label{WSS}
Assume that $a$ satisfies \eqref{AU} and $g$ satisfies \eqref{GU}.
Let $\mu_0$ be a signed measure on $\R^d\times E$ and let $(\a_t)_{t\le T}$ be a process of such signed measures.
Assume that
\begin{equation}\label{DCK}
\|\mu_0\|_1+\int_0^T\|\a_t\|_1dt<\infty
\end{equation}
Define a process of signed measures on $\R^d\times E$ by
\begin{equation}\label{MCK}
\mu_t=P_{t0}\mu_0+\int_0^tP_{ts}\a_sds,\q t\le T.
\end{equation}
Then
\begin{equation}\label{FCK}
\sup_{t\le T}\|\mu_t\|_1<\infty
\end{equation}
and
\begin{equation}\label{WCK}
(f,\mu_t)=(f,\mu_0)+\int_0^t(\cL_sf,\mu_s)ds+\int_0^t(f,\a_s)ds,\q f\in\cF,\q t\le T.
\end{equation}
On the other hand, in the case $g=0$, $(\mu_t)_{t\le T}$ is the only process of signed measures on $\R^d\times E$ satisfying \eqref{FCK} such that \eqref{WCK} holds.
Hence $(\mu_t)_{t\le T}$ satisfies
$$
\mu_t=P_t\mu_0+\int_0^tP_{t-s}(g_s\mu_s+\a_s)ds,\q t\le T.
$$
\end{proposition}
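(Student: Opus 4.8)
The plan is to establish the four assertions in order: \eqref{FCK} and \eqref{WCK} by direct estimation and duality, the uniqueness clause by a backward-heat-flow test function, and the final identity by reducing it to that uniqueness. For \eqref{FCK}, the key observation is that the measure propagator is mass-contracting up to the Feynman--Kac factor: for a positive measure, $\int_{\R^d}p_y(s,x;t,x')\,dx'=\E[Z_t]\le e^{(t-s)\|g\|_\infty}$ by \eqref{GU}, since $Z_t=\exp\int_s^t g_r(B_r,y)\,dr$ and $|g|\le\|g\|_\infty$. As $P_{ts}$ is linear and positivity preserving, $|P_{ts}\mu|\le P_{ts}|\mu|$, whence $\|P_{ts}\mu\|_1\le e^{(t-s)\|g\|_\infty}\|\mu\|_1$ for every signed measure $\mu$. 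Applying this to the two terms of \eqref{MCK} and using \eqref{DCK} gives $\sup_{t\le T}\|\mu_t\|_1\le e^{T\|g\|_\infty}\big(\|\mu_0\|_1+\int_0^T\|\a_s\|_1\,ds\big)<\infty$.

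For \eqref{WCK}, I would pair $f\in\cF$ against \eqref{MCK} and transfer the propagators onto $f$ by the duality $(P_{st}f,\mu)=(f,P_{ts}\mu)$, obtaining $(f,\mu_t)=(P_{0t}f,\mu_0)+\int_0^t(P_{st}f,\a_s)\,ds$. Expanding $P_{0t}f$ and $P_{st}f$ by the forward identity \eqref{PSF} and transferring back by duality gives $(P_{0t}f,\mu_0)=(f,\mu_0)+\int_0^t(\cL_r f,P_{r0}\mu_0)\,dr$ and $(P_{st}f,\a_s)=(f,\a_s)+\int_s^t(\cL_r f,P_{rs}\a_s)\,dr$. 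A Fubini exchange over $\{0\le s\le r\le t\}$ rewrites $\int_0^t\!\int_s^t(\cL_r f,P_{rs}\a_s)\,dr\,ds$ as $\int_0^t(\cL_r f,\int_0^r P_{rs}\a_s\,ds)\,dr$, and adding the $\mu_0$ term reassembles $\mu_r$ in the second slot, which is \eqref{WCK}. The exchanges are legitimate because $\cL_r f$ is bounded, by $\tfrac12(\sup a)\|\Delta f\|_\infty+\|g\|_\infty\|f\|_\infty$, while the relevant measures have finite total variation with the bounds from the previous step.

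For uniqueness in the case $g=0$, let $\nu_s=\mu_s-\mu_s'$ be the difference of two solutions with the same $\mu_0$ and $\a$; then $\nu_0=0$, $\sup_s\|\nu_s\|_1<\infty$, and $(f,\nu_t)=\int_0^t(\tfrac12 a\Delta f,\nu_s)\,ds$ for every $f\in\cF$. Fixing $h\in\cF$, I would test against the time-reversed heat flow $f_s=P_{t-s}h$, which lies in $\cF$ for each $s\le t$ and solves $\partial_s f_s=-\tfrac12 a\Delta f_s$. Along a partition $0=s_0<\dots<s_N=t$, I would write $(h,\nu_t)=(f_t,\nu_t)-(f_0,\nu_0)$ as a telescoping sum and treat the measure increments by the weak equation applied to the frozen function $f_{s_{i+1}}$ and the test-function increments by the heat equation for $f_s$; letting the mesh tend to $0$ shows $s\mapsto(f_s,\nu_s)$ is constant, so $(h,\nu_t)=(f_0,\nu_0)=0$. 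Since the products $\psi(x)\chi(y)$ with $\psi\in C_b^2(\R^d)$ and $\chi$ bounded measurable lie in $\cF$ and separate finite signed measures, this forces $\nu_t=0$. I expect the limit in the partition to be the main obstacle: one must bound $(\tfrac12 a\Delta f_{s_{i+1}},\nu_\sigma)-(\tfrac12 a\Delta f_\sigma,\nu_{s_i})$ uniformly, using continuity of $\sigma\mapsto\tfrac12 a\Delta P_{t-\sigma}h$ in $\|\cdot\|_\infty$ together with $\sup_\sigma\|\nu_\sigma\|_1<\infty$; this is most delicate as $\sigma\uparrow t$, where the smoothing degenerates, and is handled by writing $\Delta P_{t-\sigma}h=P_{t-\sigma}\Delta h\to\Delta h$ for $h\in\cF$.

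Finally, for the stated identity, I would use $(\cL_s f,\mu_s)=(\tfrac12 a\Delta f,\mu_s)+(f,g_s\mu_s)$ to read \eqref{WCK} as the assertion that $(\mu_t)$ solves the weak equation for the operator $\tfrac12 a\Delta$, that is with $g=0$, driven by the source $g_s\mu_s+\a_s$. This source is admissible because $\|g_s\mu_s\|_1\le\|g\|_\infty\|\mu_s\|_1$, so $\int_0^T\|g_s\mu_s+\a_s\|_1\,ds<\infty$ by \eqref{FCK} and \eqref{DCK}. By the first two parts applied with $g=0$, the explicit process $\tilde\mu_t=P_t\mu_0+\int_0^t P_{t-s}(g_s\mu_s+\a_s)\,ds$ satisfies \eqref{FCK} and the same weak equation, so the uniqueness just established gives $\mu_t=\tilde\mu_t$, which is the claimed representation.
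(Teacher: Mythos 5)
Your proposal is correct and follows essentially the same route as the paper: the $e^{T\|g\|_\infty}$ total-variation bound for \eqref{FCK}, duality plus \eqref{PSF} and Fubini for \eqref{WCK}, a time-discretized pairing against the backward heat flow $P_{st}f$ for uniqueness when $g=0$, and reduction of the final identity to that uniqueness with source $g_s\mu_s+\a_s$. The only point worth making explicit, as the paper does, is that to pass to the limit in the partition you should restrict to test functions $h$ with four bounded continuous derivatives, so that $\tfrac12 a\Delta P_{t-\sigma}h$ is itself an admissible test function in \eqref{WCK} (needed to control the increment $\nu_\sigma-\nu_{s_i}$); such $h$ still separate finite signed measures.
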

\begin{proof}
Suppose that $(\mu_t)_{t\le T}$ is given by \eqref{MCK}.
Then, for $t\le T$,
$$
\|\mu_t\|_1\le e^{t\|g\|_\infty}\|\mu_0\|_1+\int_0^te^{(t-s)\|g\|_\infty}\|\a_s\|_1ds\le e^{T\|g\|_\infty}\left(\|\mu_0\|_1+\int_0^T\|\a_s\|_1ds\right)
$$
so \eqref{FCK} holds.
Multiply \eqref{MCK} by $f\in\cF$ and integrate over $\R^d\times E$ to obtain
\begin{equation}\label{FPT}
(f,\mu_t)=(f,P_{t0}\mu_0)+\int_0^t(f,P_{ts}\a_s)ds=(P_{0t}f,\mu_0)+\int_0^t(P_{st}f,\a_s)ds.
\end{equation}
Now substitute for $P_{0t}f$ and $P_{st}f$ using \eqref{PSF} and reorder integrals using Fubini to obtain \eqref{WCK}. 

Suppose on the other hand that $(\mu_t)_{t\le T}$ satisfies \eqref{FCK} and \eqref{WCK}.
Define, for $t\le T$,
$$
\nu_t=\mu_t-P_{t0}\mu_0-\int_0^tP_{ts}\a_sds.
$$
Then, for all $f\in\cF$,
$$
(f,\nu_t)=(f,\mu_t)-(P_{0t}f,\mu_0)-\int_0^t(P_{st}f,\a_s)ds.
$$
Assume\footnote{The argument can be pursued also under suitable regularity conditions on $g$, at the cost of further elaboration, so we will
retain the more general notation.} now that $g=0$. 
Then $\cF$ is stable under $P_{st}$ and, for all $f\in\cF$, we have
\begin{equation}\label{PFG}
P_{st}f(x,y)=f(x,y)+\int_s^t\cL_rP_{rt}f(x,y)dr.
\end{equation}
Fix $n\in\N$ and set $\fsn=(t/n)\lfloor ns/t\rfloor$ and $\csn=(t/n)\lceil ns/t\rceil$.
Then
$$
(f,\mu_t)-(P_{0t}f,\mu_0)=\sum_{k=0}^{n-1}(P_{s_{k+1}t}f,\mu_{s_k+1})-(P_{s_kt}f,\mu_{s_k}).
$$
We use \eqref{WCK} and \eqref{PFG} to rewrite each term in the sum. Hence we obtain
$$
(f,\nu_t)=\int_0^t\{(\cL_s(P_{\csn t}-P_{st})f,\mu_s)+(\cL_sP_{st}f,\mu_s-\mu_\fsn)+((P_{\csn t}-P_{st})f,\a_s)\}ds.
$$
Now let $n\to\infty$, using \eqref{FCK} for estimates. 
Restrict to test-functions $f$ which are four times continuously differentiable, with all derivatives bounded uniformly on $\R^d\times E$.
Then \eqref{WCK} holds with $f$ replaced by $\cL_sP_{st}f$, which allows us to estimate the second term on the right.
We conclude that $(f,\nu_t)=0$.
Hence $\nu_t=0$ for all $t\le T$, so \eqref{MCK} holds.

For the final assertion, note that \eqref{WCK} can be written as
$$
(f,\mu_t)=(f,\mu_0)+\int_0^t(\tfrac12a(y)\Delta f,\mu_s)ds+\int_0^t(f,g_s\mu_s+\a_s)ds,\q f\in\cF,\q t\le T
$$
and apply the uniqueness result for the case $g=0$ with $\a_t$ replaced by $g_t\mu_t+\a_t$.
\end{proof}

\section{Other notions of solution}\label{OS}
We now discuss some alternative notions of solution for the coagulation-diffusion equation \eqref{CD} and establish
relations with the one already introduced. 
For the first two, we restrict the solution class for the process $(\mu_t)_{t<T}$ by the condition
\begin{equation}\label{MCD}
\sup_{s\le t}\left(\|\<w,\mu_s\>\|_1+\|\<w,\mu_s\>\|_\infty\right)<\infty, \q t<T.
\end{equation}
We showed in Proposition \ref{P1} that \eqref{MCD} is a natural property of solutions.

First we discuss a notion of weak solution. 
We take as our class of test-functions $\cF$ the set of all bounded measurable functions $f:\R^d\times E\to\R$, 
supported on $\R^d\times m^{-1}(B)$, for some compact set $B\sse(0,\infty)$, 
and such that $f(.,y)$ is twice continuously differentiable on $\R^d$ for all $y\in E$, 
with first and second derivatives bounded on $\R^d\times E$. 
Say that a process $(\mu_t)_{t<T}$ in $\cM$ is a {\em weak solution} to \eqref{CD} if \eqref{MCD} holds and
\begin{equation}\label{WS}
(f,\mu_t)=(f,\mu_0)+\int_0^t(\tfrac12a\D f,\mu_s)ds+\int_0^t(f,K(\mu_s))ds,\q f\in\cF,\q t<T.
\end{equation}
Note that \eqref{MCD} ensures that $K(\mu_s)$ is a well-defined signed measure and indeed, since
$w$ is assumed to be uniformly positive, all integrals in \eqref{WS} are well-defined and finite.

We turn to the second alternative notion of solution.
Note that, for a process $(\mu_t)_{t<T}$ in $\cM$, we have $K^-(\mu_t)=c_t\mu_t$, where
$$
c_t(x,y)=\int_EK(y,y',E)\mu_t(x,dy'),\q t\in[0,T),\q x\in\R^d,\q y\in E.
$$
Note also that $c_t(x,y)\le w(y)\<w,\mu_t\>(x)$.
For $n\in\N$, set $E_n=\{y\in E:n^{-1}\le m(y)\le n\}$.
Under conditions \eqref{A} and \eqref{MCD}, for all $n$, the diffusivity $a$ is uniformly positive and bounded on $E_n$ and 
$$
\sup_{s\le t,\,y\in E_n}\|c_s(.,y)\|_\infty<\infty,\q t<T.
$$
Write $P^\mu$ for the propagators associated to the operator $\frac12a(y)\Delta-c_t(.,y)$ on $\R^d$, corresponding to the choice $g=-c$ in the preceding section.
Say that $(\mu_t)_{t<T}$ is a {\em Markov solution} of  \eqref{CD} if \eqref{MCD} holds and
\begin{equation}\label{MS}
\mu_t=P^\mu_{t0}\mu_0+\int_0^tP_{ts}^\mu K^+(\mu_s)ds,\q t\in(0,T).
\end{equation}
We use the name Markov mainly to distinguish this notion of solution from others, 
but also because \eqref{MS} is the forward equation for the distribution of the associated non-linear Markov process.

\begin{proposition}\label{MSS}
Let $(\mu_t)_{t<T}$ be a process in $\cM$ and assume that \eqref{MCD} holds.
Then $(\mu_t)_{t<T}$ is a solution to \eqref{CD} if and only if it is a weak solution.
Moreover, $(\mu_t)_{t<T}$ is a solution to \eqref{CD} whenever it is a Markov solution.
\end{proposition}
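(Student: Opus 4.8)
The plan is to read all three assertions as instances of Proposition \ref{WSS}, applied on the interval $[0,t]$ for each fixed $t<T$. Under \eqref{MCD} the coagulation measures $K^\pm(\mu_s)$ are finite, their total masses being controlled by $\tfrac12\|\langle w,\mu_s\rangle^2\|_1\le\tfrac12\|\langle w,\mu_s\rangle\|_1\,\|\langle w,\mu_s\rangle\|_\infty$, exactly as in the proof of Proposition \ref{P1}; hence $\sup_{s\le t}\|K^\pm(\mu_s)\|_1<\infty$ and $\int_0^t\|K(\mu_s)\|_1\,ds<\infty$. Together with $\|\mu_0\|_1<\infty$ (as $\mu_0\in\cM$) this supplies the integrability hypothesis \eqref{DCK} needed to invoke Proposition \ref{WSS}, and since $w$ is uniformly positive we also get \eqref{FCK} from $\|\mu_t\|_1\le(\inf w)^{-1}\|\langle w,\mu_t\rangle\|_1$.

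The one structural point requiring care is that assumption \eqref{A} makes $a$ (and the killing rate $c$) only locally bounded, whereas Proposition \ref{WSS} asks for the global bounds \eqref{AU}, \eqref{GU}. I would remove this mismatch by localizing in the type variable. For compact $B\sse(0,\infty)$ write $E_B=m^{-1}(B)$; on $E_B$ both $a$ and $a^{-1}$ are bounded, and $\sup_{s\le t,\,y\in E_B}\|c_s(\cdot,y)\|_\infty<\infty$. The pure-diffusion propagator $P_{t-s}$ and the Feynman--Kac propagator $P^\mu_{ts}$ each act within a single type, so they commute with restriction to $\R^d\times E_B$ and preserve the test class $\cF$ of compactly mass-supported functions; the only object failing to preserve $E_B$ is the gain measure $K^+(\mu_s)$, but it enters solely as a source, and it is its restriction $K^+(\mu_s)|_{E_B}$ that drives $\mu_t|_{E_B}$. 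Thus each of \eqref{WCD} and \eqref{MS} restricts to a genuine mild equation on the bounded-coefficient type space $E_B$, to which Proposition \ref{WSS} applies verbatim; letting $B\uparrow(0,\infty)$ recovers the full statement.

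With this in place each assertion is a translation. For solution $\Leftrightarrow$ weak solution I take $g=0$ and $\alpha_s=K(\mu_s)=K^+(\mu_s)-K^-(\mu_s)$: after subtracting the finite loss integral, \eqref{WCD} is exactly \eqref{MCK} and \eqref{WS} is exactly \eqref{WCK}, so the two are equivalent by the two halves of Proposition \ref{WSS} (the forward implication \eqref{MCK}$\Rightarrow$\eqref{WCK} for one direction, the $g=0$ uniqueness \eqref{WCK}$\Rightarrow$\eqref{MCK} for the other). For Markov solution $\Rightarrow$ solution I take $g=-c$ and $\alpha_s=K^+(\mu_s)$: equation \eqref{MS} is \eqref{MCK} for this choice, so the final assertion of Proposition \ref{WSS} gives $\mu_t=P_t\mu_0+\int_0^t P_{t-s}(-c_s\mu_s+K^+(\mu_s))\,ds$, and since $c_s\mu_s=K^-(\mu_s)$ the bracket is $K(\mu_s)$, which is \eqref{WCD}.

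The main obstacle is not the equivalence of the equations but verifying that the process reconstructed from \eqref{WS} or \eqref{MS} satisfies the integrability condition \eqref{W}, so as to be a solution in the sense of Section \ref{SOL} rather than merely to satisfy \eqref{WCD}. Here I would exploit the exact cancellation
$$
w^R(z)\,p(a(z)(t-s),x,x')=m(z)1_{\{m(z)\le R\}}(2\pi(t-s))^{-d/2}e^{-|x-x'|^2/2a(z)(t-s)},
$$
in which the factor $a(z)^{d/2}$ in $w^R$ is absorbed by the Gaussian normalization; integrating the heat kernel in space then reduces the (stronger, $\R^d$-integrable) form of \eqref{W} to the mass-cutoff flux $\int_0^t\|\langle w^R,K^+(\mu_s)\rangle\|_1\,ds$. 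Controlling this using the $K$-subadditivity of $w^R$ (which follows from $m$ being mass-preserving and $a$ being $K$-decreasing) together with $K(y,y',E)\le w(y)w(y')$ is the delicate step, and it is where the initial-data hypotheses \eqref{M0} and the $L^1$, $L^\infty$ bounds of \eqref{MCD} must be brought to bear. I expect this integrability bookkeeping, rather than the translation through Proposition \ref{WSS}, to be the substantive part of the argument.
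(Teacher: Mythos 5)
Your central reduction is exactly the paper's proof: under \eqref{MCD} the measures $K^\pm(\mu_s)$ have total mass bounded by $\tfrac12\|\<w,\mu_s\>\|_1\,\|\<w,\mu_s\>\|_\infty$, so \eqref{DCK} holds, and the two assertions follow from the two halves of Proposition \ref{WSS} with $(g,\a)=(0,K^+(\mu)-c\mu)$ and $(g,\a)=(-c,K^+(\mu))$ respectively, using $c_s\mu_s=K^-(\mu_s)$. The localization in the type variable that you spell out is the generalization the paper declares obvious at the start of Section \ref{HFT}, and your observation that the propagators act within a single type (so restriction to $\R^d\times m^{-1}(B)$ commutes with everything except the gain term, which enters only as a source) is the right justification for it.

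Where you go astray is the final paragraph, which you describe as the substantive part of the argument. The paper does not attempt to verify \eqref{W} here at all: its proof is precisely the two-line translation through Proposition \ref{WSS}, reading the equivalence at the level of the mild equation \eqref{WCD} and the weak equation \eqref{WS} under the standing hypothesis \eqref{MCD}. More importantly, the route you sketch would not close. After integrating out the heat kernel you are left with $\int_0^t\|\<w^R,K^+(\mu_s)\>\|_1\,ds$, and the $K$-subadditivity of $w^R$ together with $K(y,y',E)\le w(y)w(y')$ yields only $\<w^R,K^+(\mu_s)\>\le\<w^Rw,\mu_s\>\<w,\mu_s\>$. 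Since $m/\phi(m)$ is non-decreasing, $w^Rw$ is dominated by a constant times $w^2$ but is genuinely of second order in $w$ when $a$ blows up for small masses, which \eqref{A} permits; so this bound requires control of $\<w^2,\mu_s\>$, i.e. the strong-solution condition \eqref{S2}, which \eqref{MCD} does not provide. The ``delicate step'' you flag therefore cannot be completed from the stated hypotheses by this method, and you should either drop it (matching the paper, which treats the integrability conditions as not at issue in this proposition) or recognize that it would need second-moment input beyond \eqref{MCD}.
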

\begin{proof}
For the first assertion, take $g_t=0$ and $\a_t=K^+(\mu_t)-c_t\mu_t$ in Proposition \ref{WSS}.
The second assertion is obtained similarly, by taking $g_t=-c_t$ and $\a_t=K^+(\mu_t)$.
\end{proof}

We now prove an {\em a priori} regularity property of solutions in the position variable.
Write $\cK$ for the set of kernels $\k$ on $\R^d\times\cE$ such that $\int_{\R^d}\k(x,E)dx<\infty$.
Given a kernel $\k$ on $\R^d\times\cE$ and given $t>0$, define new kernels $K^\pm(\k)$ and $P_t\k$ on $\R^d\times\cE$ by
\begin{align*}
K^+(\k)(x,A)
&=\frac12\int_{E\times E}K(y,y',A)\k(x,dy)\k(x,dy')\\
K^-(\k)(x,A)
&=\int_{A\times E}K(y,y',E)\k(x,dy)\k(x,dy')\\
P_t\k(x,A)
&=\int_{\R^d\times A}p(a(y)t,x,x')\k(x',dy)dx'.
\end{align*}
Note that, if $\k$ is a version of the density for $\mu\in\cM$, then
$K^\pm(\k)$ and $P_t\k$ are versions of the densities for $K^\pm(\mu)$ and $P_t\mu$.
Say that $(\k_t)_{t<T}$ is a process in $\cK$ if $\k_t\in\cK$ for all $t$ and the map $t\mapsto\k_t:[0,T)\to\cK$ is measurable.
Equivalently, the map $(t,x,A)\mapsto\k_t(x,A):[0,T)\times\R^d\times\cE$ is a kernel such that $\int_{\R^d}\k_t(x,E)dx<\infty$ for all $t<T$.

We say that a process $(\k_t)_{t<T}$ in $\cK$ is a {\em precise solution} to \eqref{CD} if 
\begin{equation}\label{PCD}
\k_t+\int^t_0P_{t-s}K^-(\k_s)ds=P_t\k_0+\int^t_0P_{t-s}K^+(\k_s)ds,\q t\in(0,T)
\end{equation}
and the following integrability conditions hold: 
\begin{equation}\label{PM0}
\int_{\R^d\times E}w(y)\k_0(x,dy)dx<\infty,\q
\k_0(x,dy)dx\le dx\otimes\mu_0^*(dy)
\end{equation}
and
\begin{equation}\label{PW}
\int^t_0\int_Ew^R(y)P_{t-s}K^+(\k_s)(x,dy)ds<\infty,\q\text{a.a. }x\in\R^d,\q R\in(0,\infty),\q t\in(0,T)
\end{equation}
where $\mu^*_0$ and $w^R$ are as in \eqref{M0} and \eqref{W}.
Note that \eqref{PCD} is an equality of kernels, not measures, and that no exceptional sets in $\R^d$ are allowed.
It is clear that, if $(\k_t)_{t<T}$ is a precise solution to \eqref{CD}, then there is a unique solution $(\mu_t)_{t<T}$ to \eqref{CD} such that
\begin{equation}\label{PM1}
\mu_t(B\times A)=\int_{x\in B}\k_t(x,A)dx,\q B\in\cB(\R^d),\q A\in\cE.
\end{equation}

\begin{proposition}
Let $(\mu_t)_{t<T}$ be a solution to \eqref{CD}. 
Choose a kernel $\k_0$ for $\mu_0$. 
Then there is a unique\footnote{In fact, for $t\in(0,T)$, $\k_t$ does not depend on the choice of $\k_0$.} precise solution $(\k_t)_{t<T}$ to \eqref{CD} starting from $\k_0$ such that \eqref{PM1} holds.
\end{proposition}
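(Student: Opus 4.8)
The plan is to read the precise solution off from the right-hand side of the equation, exploiting that the heat propagator smooths away null sets. The decisive point is a \emph{version-independence} property: if $\k,\k'$ are two densities for the same $\nu\in\cM$, so that $\k(x,\cdot)=\k'(x,\cdot)$ for almost every source point $x$, then for every $u>0$ and every $x'$ one has $P_u\k(x',A)=\int_{\R^d\times A}p(a(y)u,x,x')\k(x,dy)dx=P_u\k'(x',A)$, since the integral over the source variable $x$ against the Gaussian kernel ignores the Lebesgue-null set where $\k$ and $\k'$ differ. As $K^\pm(\k_s)$ is always a density for $K^\pm(\mu_s)$, the kernels $P_{t-s}K^\pm(\k_s)$ and $P_t\k_0$ depend only on the measures $\mu_s,\mu_0$ and are genuine kernels at \emph{every} $x$. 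This gives uniqueness at once: if $(\k_t)$ and $(\k_t')$ are precise solutions both satisfying \eqref{PM1}, then for each $t$ the kernels $\k_s,\k_s'$ are densities of the same $\mu_s$, so both sides of \eqref{PCD} coincide at every point, forcing $\k_t=\k_t'$; the same computation shows the value for $t>0$ does not depend on the chosen $\k_0$.

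For existence the one delicacy is non-negativity: the naive candidate $P_t\k_0+\int_0^tP_{t-s}(K^+(\k_s)-K^-(\k_s))\,ds$ is a signed kernel, non-negative only almost everywhere, whereas \eqref{PCD} must hold with no exceptional set. I would therefore build $\k_t$ from the sub-Markovian, Markov form. First I would verify that the given solution is itself a Markov solution, i.e.\ \eqref{MS} as measures (the converse of the last assertion of Proposition \ref{MSS}): writing $\bar\mu_t=P^\mu_{t0}\mu_0+\int_0^tP^\mu_{ts}K^+(\mu_s)\,ds$ with the killing rate $c$ frozen at the given $\mu$, the Duhamel identity turns this into a mild equation, and subtracting \eqref{WCD} yields the \emph{linear} Volterra equation $\bar\mu_t-\mu_t=-\int_0^tP_{t-s}\big(c_s(\bar\mu_s-\mu_s)\big)ds$ with zero initial datum, whence $\bar\mu_t=\mu_t$ by Gronwall. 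Then I would fix a jointly measurable family of densities $(\k_s)_{s<T}$ with $\k_0$ the given kernel and \emph{define} $\k_t=P^\mu_{t0}\k_0+\int_0^tP^\mu_{ts}K^+(\k_s)\,ds$. Since $\pi_y\le1$ makes $P^\mu_{ts}$ sub-Markovian, this $\k_t$ is a sum and integral of non-negative kernels, hence a bona fide kernel at every $x$; and it is a density of $\bar\mu_t=\mu_t$, which is \eqref{PM1}, so $\k_t\in\cK$.

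It then remains to recover \eqref{PCD} and the integrability. Expanding each factor $P^\mu$ in the definition of $\k_t$ by the pointwise Feynman--Kac/Duhamel identity $P^\mu_{ts}=P_{t-s}-\int_s^tP_{t-r}(c_r\,\cdot)P^\mu_{rs}\,dr$ and interchanging the $r$- and $s$-integrals, I obtain $\k_t=P_t\k_0+\int_0^tP_{t-s}K^+(\k_s)ds-\int_0^tP_{t-r}K^-(\k_r)dr$ at every $x$, where the identity $c_r\k_r=K^-(\k_r)$ and the smoothing by $P_{t-r}$ reconcile the density version used in $c_r$; this is exactly \eqref{PCD}. The conditions \eqref{PM0} and \eqref{PW} carry over from \eqref{M0} and \eqref{W} because $\k_0$ may be chosen with $\k_0(x,\cdot)\le\mu_0^*$ and because $P_{t-s}K^+(\k_s)=P_{t-s}K^+(\mu_s)$ is version-independent.

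The main obstacle is the middle step: under \eqref{A} the killing rate $c_t$ is only locally bounded in the type variable, so neither the propagator $P^\mu$ nor the pointwise Duhamel identity is directly furnished by Section \ref{HFT}. I would handle this through the truncation $E_n=\{y:n^{-1}\le m(y)\le n\}$ noted there, on which $a$ is uniformly elliptic and $c$ is bounded, carrying out the Gronwall estimate, the construction of $\k_t$, and the verification of \eqref{PCD} for types in $E_n$, and then letting $n\to\infty$ by monotone convergence, the requisite uniform bounds being supplied by Proposition \ref{P1}. The measurable selection of $(\k_s)_{s<T}$ and the Fubini interchanges are routine given those bounds.
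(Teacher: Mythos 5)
Your proposal is correct in substance but takes a genuinely different route from the paper for the existence half. The shared core is the observation that $P_t\k_0$ and $P_{t-s}K^\pm(\k_s)$ are determined at \emph{every} $x$ by the measures $\mu_0,\mu_s$ alone, since integration of a density against the Gaussian kernel over the source variable ignores Lebesgue-null sets; your uniqueness argument is essentially the paper's, except that the paper phrases the pointwise identification through Lipschitz continuity of $x\mapsto\k_t(x,A)$ for $A\sse E_n$ (obtained by differentiating the heat kernel under the integral and using $\|\<w,\mu_s\>\|_\infty\le\<w,\mu_0^*\>$), while you subtract the $K^-$ term directly --- for which you should note explicitly that $\int_0^tP_{t-s}K^-(\k_s)(x,A)\,ds$ is finite for every $x$ when $A\sse E_n$, exactly the estimate the paper makes. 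Where you genuinely diverge is on non-negativity. The paper resolves it cheaply: the signed candidate $\k^n_t(\cdot,A)=\k^0_t(\cdot,A\cap E_n)+\k^+_t(\cdot,A\cap E_n)-\k^-_t(\cdot,A\cap E_n)$ is Lipschitz in $x$ and integrates to the non-negative measure $\mu_t(\cdot\times(A\cap E_n))$, so a.e.\ non-negativity upgrades to everywhere non-negativity by continuity, and one then lets $n\to\infty$ monotonically. You instead manufacture non-negativity from the sub-Markovian Feynman--Kac representation, which obliges you to first prove that every solution satisfying \eqref{MCD} is a Markov solution --- the converse of the second assertion of Proposition \ref{MSS}, which the paper does not establish in Section \ref{OS} (it proves \eqref{MS} only for the constructed maximal solution, in Theorem \ref{T1}(iii), and that theorem is not available here). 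Your Duhamel-plus-Gronwall derivation of this converse does go through after truncating the type variable to $E_n$ so that $c$ is bounded, with the total-mass finiteness needed for the subtraction supplied by \eqref{M1}; but it is a nontrivial extra lemma, and you should present it as such rather than as a remark. What each approach buys: the paper's is shorter, self-contained, and yields as a by-product the spatial Lipschitz regularity of $\k_t(\cdot,A)$ on mass slabs; yours avoids differentiating under the integral, makes the footnoted independence of $\k_0$ for $t>0$ immediate, and establishes along the way a statement (solutions are Markov solutions) of independent interest that the paper leaves implicit.
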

\begin{proof}
Define, for $t\in(0,T)$, kernels $\k_t^0$ and $\k_t^\pm$ on $\R^d\times\cE$ by
$$
\k_t^0(x,A)=(P_t\mu_0)(x,A),\q
\k_t^\pm(x,A)=\int_0^t(P_{t-s}K^\pm(\mu_s))(x,A)ds.
$$
Fix $A\in\cE$ such that $a(y)\in[a_1,a_2]$ for all $y\in A$, for some $0<a_1<a_2<\infty$.
Then 
$$
p(a(y)t,x,x')\le(a_2/a_1)^{d/2}p(a_2t,x,x')
$$
for all $y\in A$.
We use this inequality, together with the bound $\|\<w,\mu_t\>\|_\infty\le\<w,\mu_0^*\>$ from Proposition \ref{P1}, to see that, for all $x\in\R^d$,
\begin{align*}
\k_t^+(x,A)
&=\int_0^t\int_{\R^d\times E\times E\times E}1_A(z)p((t-s)a(z),x,x')K(y,y',dz)\mu_s(x',dy)\mu_s(x',dy')dx'ds\\
&\le(a_2/a_1)^{d/2}\int_0^t\int_{\R^d}p(a_2(t-s),x,x')\<w,\mu_s\>^2(x')dx'ds\le(a_2/a_1)^{d/2}\<w,\mu_0^*\>^2t.
\end{align*}
Similar estimates show that $\k_t^0(.,A)$ and $\k_t^-(.,A)$ are also bounded on $\R^d$.
Next, by differentiating under the integral sign and estimating similarly, we see that $\k_t^0(.,A)$
and $\k_t^\pm(.,A)$ are moreover Lipschitz on $\R^d$.

Recall that $E_n=\{y\in E:n^{-1}\le m(y)\le n\}$. Then $a$ and $a^{-1}$ are bounded on $E_n$.
Define, for $t\in(0,T)$ and $n\in\N$, signed kernels $\k_t^n$ on $\R^d\times\cE$ by
\begin{equation}\label{PM2}
\k_t^n(x,A)=\k_t^0(x,A\cap E_n)+\k_t^+(x,A\cap E_n)-\k_t^-(x,A\cap E_n).
\end{equation}
Then $\k_t^n(.,A)$ is bounded and Lipschitz on $\R^d$ for all $A\in\cE$ and, since $(\mu_t)_{t<T}$ is a solution, we have
$$
\mu_t(B\times(A\cap E_n))=\int_{x\in B}\k_t^n(x,A)dx,\q B\in\cB(\R^d).
$$
Hence $0\le\k_t^n(x,A)\le\k_t^{n+1}(x,A)$ for all $A\in\cE$ and $\<w,\k_t^n\>(x)\le\<w,\mu_0^*\>$ for all $x$.
Hence we can define $\k_t\in\cK$ for $t\in(0,T)$ by
$$
\k_t(x,A)=\lim_{n\to\infty}\k_t^n(x,A)
$$
and then $\k_t$ is a density for $\mu_t$ for all $t$.
Now, for all $x\in\R^d$ and $A\in\cE$, we have, as $n\to\infty$,
\begin{align*}
\k_t^0(x,A\cap E_n)
&=P_t\k_0(x,A\cap E_n)\to P_t\k_0(x,A)\\
\k_t^\pm(x,A\cap E_n)
&=\int_0^t(P_{t-s}K^\pm(\k_s))(x,A\cap E_n)ds\to\int_0^t(P_{t-s}K^\pm(\k_s))(x,A)ds.
\end{align*}
So, on rearranging \eqref{PM2} and letting $n\to\infty$, we see that $(\k_t)_{t<T}$ is a precise solution to \eqref{CD}.
Finally, if $(\k'_t)_{t<T}$ is any precise solution to \eqref{CD} which is a density for $(\mu_t)_{t<T}$,
then, for all $n\in\N$ and all $A\in\cE$ with $A\sse E_n$, the map $x\mapsto\k'_t(x,A)$ is Lipschitz on $\R^d$, by the argument above, 
so $\k'_t(x,A)=\k_t(x,A)$ for all $x$.
Hence $(\k_t)_{t<T}$ is unique.
\end{proof}

\section{Related work}\label{RW}
Prior work has considered function solutions, either in the discrete case, where $\mu_t(x,dy)=\sum_{m=1}^\infty f_t^m(x)\d_m(dy)$, or the continuous case, when $\mu_t(x,dy)=f_t(x,y)dy$.
On the question of existence in the discrete case, see \cite{MR2595194,MR1954870,MR1491848,MR1886116,MR2026914}.
We will restrict our review on existence to works addressing the continuous case.
Amann \cite{MR2001f:35195} proved local existence, uniqueness and mass conservation in a general setting, assuming uniform bounds on diffusivity and coagulation rates and uniform positivity of the diffusivity.
Later, Amann and Walker \cite{MR2174971}, proved global existence for small initial data under similar hypotheses.
Lauren\c{c}ot and Mischler \cite{MR1892231} proved global existence when the diffusivity $a:(0,\infty)\to(0,\infty)$ and its reciprocal are bounded on compacts 
and the coagulation kernel $k:(0,\infty)^2\to[0,\infty)$ satisfies the Galkin--Tupchiev monotonicity condition
\begin{equation}\label{TV}
k(y,y')\le k(y,y+y'),\q y,y'\in(0,\infty)
\end{equation}
along with the growth bounds
\begin{equation}\label{TW}
\sup_{y,y'\le R}k(y,y')<\infty,\q\sup_{y\le R}\frac{k(y,y')}{y'}\to0\q\text{as $y'\to\infty$},\q\text{for all $R$}.
\end{equation}
Both \cite{MR2001f:35195} and \cite{MR1892231} include a term modelling particle fragmentation, 
while \cite{MR2001f:35195} allows also for spatially dependent diffusion,
and \cite{MR2174971} allows for a further particle-shattering transition. 
None of these is possible in our model. 
See also Bailleul \cite{MR2827105} for an interesting special case of coagulation with spatially dependent diffusion.
Mischler and Rodriguez Ricard \cite{MR1979355} showed that the approach of \cite{MR1892231} can be extended 
(in the context of coagulation-diffusion in a bounded domain in $\R^3$) to the case where \eqref{TV} and \eqref{TW} are replaced by the weaker monotonicity condition
$$
k(y,y')\le k(y,y+y')+k(y',y+y'),\q y,y'\in(0,\infty)
$$
and growth bounds
\begin{equation}\label{TX}
\sup_{y,y'\in[R^{-1},R]}k(y,y')<\infty,\q\sup_{R^{-1}\le y\le R}\frac{k(y,y')}{y'}\to0\q\text{as $y'\to\infty$},\q\text{for all $R$}.
\end{equation}
We use a different approach to these papers, which allows us to dispense with any monotonicity condition but requires a different type of growth bound on $k$ for large particles.

As noted by Ball and Carr \cite{MR92h:82086} in the spatially homogeneous setting, the questions of uniqueness and mass conservation for coagulation equations are related to the existence of moment bounds for solutions. 
Here, the discrete and continuous cases do not differ substantially.
Hammond and Rezakhanlou \cite{MR2350433} and Rezakhanlou \cite{MR2726119} obtained suitable moment bounds for solutions under assumptions including that the diffusivity $a$
is positive, uniformly bounded and non-increasing, and that the coagulation kernel $k$ satisfies
$$
\sup_{y,y'}\frac{k(y,y')}{yy'}<\infty,\q \frac{k(y,y')}{(y+y')(a(y)+a(y'))}\to0\q\text{as $y+y'\to\infty$}.
$$
Rezakhanlou \cite{MR3187683} has shown that the non-increasing condition on the diffusivity can be relaxed to some extent.
We will retain this non-increasing condition but are able to prove uniqueness and mass conservation also when the diffusivity and coagulation kernels are unbounded for particles of small mass.

The approach taken in this paper is an extension to the spatially inhomogeneous setting of that developed in \cite{MR2002c:82079}. 
A version of Theorem \ref{T1} for the special case discussed in Section \ref{BC} is stated, along with a sketch of elements of the proof, in \cite[Section 3]{MR2249640}.

\section{Existence and uniqueness}\label{EU}
The notions of solution, strong solution and conservative solution for the coagulation-diffusion equation \eqref{CD} are defined in Section \ref{SOL}.
The notion of Markov solution is defined in Section \ref{OS}.

\begin{theorem}\label{T1}
Assume that the diffusivity $a$ and the coagulation kernel $K$ satisfy condition \eqref{A} and that $\mu_0\in\cM$ satisfies condition \eqref{M0}.
Set $\alpha =\l w^2, \mu^*_0 \r $.  
There exists $\zeta (\mu_0)\in [\alpha^{-1}, \infty]$ and a strong solution $(\mu_t)_{t < \zeta(\mu_{0})}$ to the coagulation-diffusion equation \eqref{CD} starting from $\mu_0$ with
the following property: if $(\nu_t)_{t<T}$ is any other solution to
\eqref{CD} starting from $\mu_0$, then
\begin{itemize}
\item[{\rm (i)}]
$T \leq \zeta (\mu_0)$ implies $\nu_t=\mu_t$ for all $t<T$,
\item[{\rm (ii)}] $T > \zeta (\mu_0)$ implies $(\nu_t)_{t<T}$ is not strong.
\end{itemize}
Moreover,
\begin{itemize}
\item[{\rm (iii)}] $(\mu_t)_{t<\z(\mu_0)}$ is a Markov solution to \eqref{CD}, 
\item[{\rm (iv)}] if 
$\|\<mw,\mu_0\>\|_1<\infty$, 
then $(\mu_t)_{t<\zeta(\mu_{0})}$ is conservative. 
\end{itemize}
Suppose further that, for some non-negative measurable function $v$ on $E$,
with $v/w$ bounded,
\begin{equation}\label{VW}
K(y,y', E) \leq w(y) v(y') + v(y) w (y') 
\end{equation}
then
\begin{itemize}
\item[{\rm (v)}] if $\a<\infty$ and $a^{-d/2}w v$ is $K$-subadditive, 
then $\zeta (\mu_0)=\infty$.
\end{itemize}
\end{theorem}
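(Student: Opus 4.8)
The plan is to show that, under the hypotheses of (v), the second moment $\|\l w^2,\mu_t\r\|_\infty$, which controls the strong-solution condition \eqref{S2} and whose finite-time blow-up determines $\zeta(\mu_0)$, grows at most exponentially in $t$; this rules out blow-up and forces $\zeta(\mu_0)=\infty$. The improvement over the general bound $\zeta(\mu_0)\ge\alpha^{-1}$ comes from turning the quadratic (Riccati) self-interaction of this moment into a linear (Gronwall) one, using the refined rate bound \eqref{VW} together with a new uniform bound on $\|\l wv,\mu_t\r\|_\infty$.

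First I would establish the analogue for $wv$ of the $K$-subadditivity of $w^{t,x,x'}$ used just before Proposition \ref{P1}. Since
$$
w(y)v(y)p(a(y)t,x,x')=(2\pi t)^{-d/2}\,(a^{-d/2}wv)(y)\,e^{-|x-x'|^2/2a(y)t},
$$
and since $a^{-d/2}wv$ is $K$-subadditive by hypothesis while $a$ is $K$-decreasing, so that the Gaussian factor is $K$-decreasing, the function $y\mapsto w(y)v(y)p(a(y)t,x,x')$ is $K$-subadditive, by exactly the argument given there for $w$. Hence $\l wv,P_rK(\mu)\r(x)\le 0$, and running the proof of Proposition \ref{P1} with $w$ replaced by $wv$ — the integrability of $\l wv,P_{t-s}K^+(\mu_s)\r$ following from \eqref{S1} and \eqref{S2}, which are valid since $(\mu_t)$ is strong on $[0,\zeta(\mu_0))$ — gives $\l wv,\mu_t\r\le\l wv,P_t\mu_0\r$ a.e., so that
$$
\|\l wv,\mu_t\r\|_\infty\le\l wv,\mu_0^*\r=:\beta,\q t<\zeta(\mu_0).
$$
Here $\beta<\infty$ because $v/w$ is bounded and $\alpha=\l w^2,\mu_0^*\r<\infty$.

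Next I would use this to linearise the gain in the second moment. As $w$ is $K$-subadditive, $w^2(z)-w^2(y)-w^2(y')\le 2w(y)w(y')$ for $K(y,y',.)$-a.e.\ $z$, whence, by \eqref{VW},
$$
\l w^2,K(\mu_s)\r(x)\le\int_{E\times E}w(y)w(y')K(y,y',E)\mu_s(x,dy)\mu_s(x,dy')\le 2\l w^2,\mu_s\r(x)\,\l wv,\mu_s\r(x)\le 2\beta\,\l w^2,\mu_s\r(x).
$$
This is the bound used to control the second moment in the construction of $(\mu_t)$ for Theorem \ref{T1}, except that the factor producing the Riccati blow-up is now the constant $2\beta$. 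Feeding it into the same a priori estimate for $\sup_x\l w^2,\mu_s\r$ and applying Gronwall yields $\|\l w^2,\mu_t\r\|_\infty\le\alpha\, e^{2\beta t}$ for all $t<\zeta(\mu_0)$. Since, by the construction behind Theorem \ref{T1}, $\zeta(\mu_0)$ can be finite only if $\|\l w^2,\mu_t\r\|_\infty\to\infty$ as $t\uparrow\zeta(\mu_0)$, this bound forces $\zeta(\mu_0)=\infty$.

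The step I expect to be most delicate is the passage from the pointwise bound $\l w^2,K(\mu_s)\r(x)\le 2\beta\l w^2,\mu_s\r(x)$ to a global-in-time bound on $\|\l w^2,\mu_t\r\|_\infty$. Because $a(y)$ is type-dependent and may be unbounded for small mass, $\l w^2,P_r\mu\r$ is a superposition of heat flows at different rates and the semigroup does not visibly contract $\|\l w^2,\cdot\r\|_\infty$, so a scalar maximum principle is unavailable. I would handle this exactly as in the existence proof — working on $E_n=\{n^{-1}\le m\le n\}$, where $a$ and $a^{-1}$ are bounded and $p(a(y)r,x,x')$ can be compared to a fixed-rate kernel, and passing to the limit by monotone convergence — so that the only genuinely new ingredient beyond that proof is the uniform bound $\|\l wv,\mu_t\r\|_\infty\le\beta$, which is what replaces the quadratic feedback by a linear one.
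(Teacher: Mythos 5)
Your proposal addresses only part (v) of the theorem: it takes as given the construction of the maximal strong solution, the value $\zeta(\mu_0)\ge\a^{-1}$, uniqueness (i)--(ii), the Markov property (iii) and conservation (iv), none of which are argued. Within part (v), your central idea is exactly the paper's: use the $K$-subadditivity of $a^{-d/2}wv$ (combined with $a$ being $K$-decreasing, so that $y\mapsto w(y)v(y)p(a(y)t,x,x')$ is $K$-subadditive) to get the a priori bound $\|\<wv,\mu_t\>\|_\infty\le\<wv,\mu_0^*\>$, and then use \eqref{VW} to replace the quadratic self-interaction of the second moment by a linear one, so that Gronwall gives $\|\<w^2,\mu_t\>\|_\infty\le\a e^{2\b t}$ instead of the Riccati blow-up at time $\a^{-1}$. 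That is the right mechanism and matches the paper's proof.

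There are, however, two genuine gaps in how you close the argument. First, the step you yourself flag as delicate is not resolved by your proposed device. The quantity appearing in the mild equation is $\<w^2,P_{t-s}K(\mu_s)\>(x)$, in which the heat kernel is evaluated at the diffusivity $a(z)$ of the \emph{new} particle; your pointwise bound $\<w^2,K(\mu_s)\>(x)\le2\b\<w^2,\mu_s\>(x)$ does not pass through this type-dependent propagator. Your fallback --- restricting to $E_n$ and comparing $p(a(y)r,x,x')$ to a fixed-rate kernel --- introduces a constant $(\sup_{E_n}a/\inf_{E_n}a)^{d/2}$ which depends on $n$ and is unbounded when $a$ is unbounded for small mass (a case the theorem is explicitly designed to cover), so the resulting Gronwall bound does not survive the limit $n\to\infty$. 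The paper instead bounds the integrand directly, using that both $w$ and $wp^{r,x,x'}$ are $K$-subadditive to get $w^2(z)p(z)-w^2(y)p(y)-w^2(y')p(y')\le w(y)p(y)w(y')+w(y)p(y')w(y')$, then applies \eqref{VW} and controls the Gronwall functional $h_n(t)=\sup_{s\ge0}\|\<w^2,P_s\mu^n_t\>\|_\infty$, with $n$-independent constants; you should use this same subadditivity trick (which you already deploy for $wv$) rather than a fixed-rate comparison. Second, you run the estimate on the limiting solution for $t<\zeta(\mu_0)$ and then invoke the assertion that $\zeta(\mu_0)<\infty$ forces $\|\<w^2,\mu_t\>\|_\infty\to\infty$; this blow-up characterization of $\zeta(\mu_0)$ is not established and a restart argument would require propagating the domination $\mu_{t_0}(dx,dy)\le dx\otimes\mu^*_{t_0}(dy)$ of \eqref{M0}, which is not automatic. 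The paper avoids this entirely by proving the exponential bound for the approximations $(\mu^n_t)_{t\ge0}$, which are defined for all time, uniformly in $n$, and concluding directly that the limit is strong on $[0,\infty)$.
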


The proof of Theorem \ref{T1} will rely on an approximation scheme, the elements of which are constructed in the following two lemmas.
The proofs of the lemmas are given below.
Recall that $E_n = \{y \in E : n^{-1} \leq m(y) \leq n\}$.
\def\j{
and $\tilde E_{0,n}=E\sm E_n$. 
Define, for $m\ge0$, $\tilde E_{m+1,n}=E_n\times\tilde E_{m,n}$ and set
$$
\tilde E_n=\bigcup_{m\ge0}\tilde E_{m,n}.
$$
Fix $n$ and set $\tilde E=E_n\cup\tilde E_n$.
Write $\tilde\cE$ for the obvious $\s$-algebra on $\tilde E$, derived from $\cE$.
The function $w$ is already defined on $E=E_n\cup\tilde E_{0,n}$. 
Write $\tilde w$ for the unique extension of $w$ to $\tilde E$ such that $w(y,y')=w(y)+w(y')$ for all $y\in E_n$ and $y'\in\tilde E$.
Define a kernel $\tilde K$ on $\tilde E\times\tilde E\times\tilde\cE$ by
$$
\tilde K(y,y'dz)=1_{\{y,y'\in E_n\}}K(y,y',dz)
+w(y)w(y')1_{\{y\in E_n,y'\in\tilde E_n\}}\d_{(y,y')}(dz)
+w(y)w(y')1_{\{y\in \tilde E_n,y'\in E_n\}}\d_{(y',y)}(dz).
$$
GETS TOO COMPLICATED TO HANDLE DIFFUSION OF MULTIPARTICLES
}
Consider the coagulation kernels 
$$
K_n(y,y',dz)=1_{\{z\in E_n\}}K(y,y',dz),\q\tilde K_n(y,y',dz)=1_{\{z\in E\sm E_n\}}K(y,y',dz)
$$
and define $K^\pm_n (\mu)$ and $\tilde K^-_n(\mu)$ by analogy with \eqref{K+}, \eqref{K-}.  
Thus, in particular, $K^-(\mu)=K^-_n(\mu)+\tilde K_n^-(\mu)$.
Set $K_n(\mu)=K^+_n(\mu)-K^-_n(\mu)$.

\begin{lemma}\label{L1}
For each $n \in {\N}$, there exist processes $(\mu^n_t)_{t \geq 0}$ and $(\lambda^n_t)_{t \geq 0}$ in $\cM$ such that
$$
\mu^n_0 = 1_{E_{n}}\mu_0,\q \mu_t^n=1_{E_n}\mu_t^n,\q\lambda^n_0 = 1_{E^{c}_{n}} \mu_0
$$
and such that, setting $\eta^n_t = \l w, \lambda^n_t\r$, we have, for all $t\ge0$,
\begin{equation}\label{MBn}
\<w,\mu^n_t\>+\eta^n_t=\<w,P_t\mu_0\>+\int^t_0\<w,P_{t-s}K_n(\mu^n_s)\>ds\q\text{a.e.}
\end{equation}
and
\begin{align}\label{AE1}
\mu^n_t
&=P_t\mu^n_0+\int^t_0P_{t-s}(K^+_n(\mu^n_s)-K^-(\mu^n_s)-\eta^n_sw \mu^n_s)ds\\
\label{AE2}
\lambda^n_t
&=P_t\lambda^n_0+\int^t_0P_{t-s}(\tilde K^-_n(\mu^n_s)+\eta^n_sw \mu^n_s)ds.
\end{align}
Moreover $(\mu^n_t)_{t \geq 0}$ satisfies
$$
\mu_t^n=\tilde P^n_{t0}\mu^n_0+\int_0^t\tilde P^n_{ts}K^+(\mu^n_s)ds,\q t\ge 0
$$
where $(\tilde P^n_{ts}:0\le s\le t)$ is the propagator on measures on $\R^d\times E$ associated to the time-dependent operator $\frac12a(y)\Delta-c_t^n(.,y)$ on $\R^d$ and 
$$
c^n_t(x,y)=\int_{E\times E}K(y,y',E)\mu_t^n(x,dy)\mu_t^n(x,dy')+w(y)\eta_t^n(x).
$$
\end{lemma}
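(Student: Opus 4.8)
The plan is to read \eqref{AE1} as the mild form of a heat-flow-with-types equation with killing, and then to invoke Proposition~\ref{WSS} to pass to the propagator form. First I would absorb the two loss terms into a single potential. Since $\mu^n_s$ is carried by $\R^d\times E_n$, one has $K^-(\mu^n_s)+\eta^n_s w\mu^n_s=c^n_s\mu^n_s$ with $c^n_s(x,y)=\int_E K(y,y',E)\mu^n_s(x,dy')+w(y)\eta^n_s(x)$, which is exactly the potential defining $\tilde P^n$. Hence \eqref{AE1} becomes
\[
\mu^n_t=P_t\mu^n_0+\int_0^tP_{t-s}\bigl(-c^n_s\mu^n_s+K^+_n(\mu^n_s)\bigr)\,ds,
\]
which is the mild equation of Proposition~\ref{WSS} with $g_s=-c^n_s$ and source $\a_s=K^+_n(\mu^n_s)$ (this gain term of \eqref{AE1} is the one meant in the assertion: as $\mu^n_s$ lives on $E_n$ it equals $1_{E_n}K^+(\mu^n_s)$).

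Before applying Proposition~\ref{WSS} I must verify its hypotheses, and this is the step that needs care, because $a$ and $c^n$ are controlled only on $E_n$. On $E_n$ both $a$ and $a^{-1}$ are bounded, so \eqref{AU} holds there. For \eqref{GU} I would use the a priori bound furnished by \eqref{MBn}: the coagulation integral $\<w,P_{t-s}K_n(\mu^n_s)\>$ is non-positive, by $K$-subadditivity of $w^{t,x,x'}$ exactly as in \eqref{F}, so $\<w,\mu^n_t\>+\eta^n_t\le\<w,P_t\mu_0\>\le\<w,\mu^*_0\>$ a.e. Consequently, using $K(y,y',E)\le w(y)w(y')$, $c^n_s(x,y)\le w(y)\bigl(\<w,\mu^n_s\>(x)+\eta^n_s(x)\bigr)\le(\sup_{E_n}w)\,\<w,\mu^*_0\><\infty$, giving $\|c^n\|_\infty<\infty$ on $\R^d\times E_n$. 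The integrability \eqref{DCK} follows from $\|\mu^n_0\|_1\le\|\mu_0\|_1<\infty$ and the \eqref{M1}-type bound $\|K^+_n(\mu^n_s)\|_1\le\|\<w,\mu^n_s\>\|_1\,\<w,\mu^*_0\>$, finite uniformly on $[0,T]$. Thus Proposition~\ref{WSS}, in the form valid on sets $E_n\ua E$ noted after its statement, applies with type space $E_n$.

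I would then introduce the candidate $\nu_t=\tilde P^n_{t0}\mu^n_0+\int_0^t\tilde P^n_{ts}K^+_n(\mu^n_s)\,ds$. This has the shape \eqref{MCK} for $g=-c^n$ and $\a=K^+_n(\mu^n)$, so the forward direction of Proposition~\ref{WSS} shows that $\nu$ satisfies the same mild equation as $\mu^n$, namely $\nu_t=P_t\mu^n_0+\int_0^tP_{t-s}(-c^n_s\nu_s+K^+_n(\mu^n_s))\,ds$. It remains only to see that this linear mild equation has at most one solution with $\sup_{t\le T}\|\cdot\|_1<\infty$. Writing $\delta=\nu-\mu^n$, the data cancel and $\delta_t=-\int_0^tP_{t-s}(c^n_s\delta_s)\,ds$; since $P_{t-s}$ is a total-variation contraction and $\|c^n\|_\infty<\infty$ on $\R^d\times E_n$, this yields $\|\delta_t\|_1\le\|c^n\|_\infty\int_0^t\|\delta_s\|_1\,ds$, so Gronwall (or Picard iteration) gives $\delta\equiv0$. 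Hence $\mu^n_t=\nu_t$, which is the claimed representation.

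The main obstacle is the verification of the hypotheses of Proposition~\ref{WSS} after restriction to $E_n$, and in particular deriving the uniform bound on $c^n$ from \eqref{MBn} through the $K$-subadditivity of $w^{t,x,x'}$; once that is in hand, the passage from the mild form to the propagator form is the routine Gronwall argument above.
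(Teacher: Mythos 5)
There is a genuine gap: you have addressed only the final assertion of the lemma, not the lemma itself. Lemma \ref{L1} is first and foremost an \emph{existence} statement: the processes $(\mu^n_t)_{t\ge0}$ and $(\lambda^n_t)_{t\ge0}$ satisfying \eqref{MBn}, \eqref{AE1} and \eqref{AE2} must be constructed, and that is where almost all of the work lies. Your argument takes these processes, together with every property claimed for them, as given, and then derives the propagator representation --- the one part of the lemma that is comparatively routine. The paper builds the pair by Picard iteration in the Banach space $\cV_\infty$ of signed measures with bounded densities (using the Lipschitz estimates for $K^\pm_n$, $\tilde K^-_n$ and $P_t$ on measures supported on $\R^d\times E_n$), obtains a local solution on $[0,T]$ with $T=(2C)^{-1}$ after the normalization $\<w,\mu_0^*\>=1$, and must then (a) prove that $\mu^n_t$ and $\lambda^n_t$ are genuinely non-negative, so that they lie in $\cM$, and (b) extend to all $t\ge0$ by restarting the construction at time $T$, which is possible only because \eqref{MBn} yields $\|\<w,P_t(\mu^n_T+\lambda^n_T)\>\|_\infty\le\|\<w,P_{t+T}\mu_0\>\|_\infty\le1$. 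None of this appears in your proposal, nor does the derivation of \eqref{MBn} or of \eqref{AE2}.

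The omission also disturbs the logic of the part you do treat. You obtain $\|c^n\|_\infty<\infty$ from $\<w,\mu^n_t\>+\eta^n_t\le\<w,P_t\mu_0\>$, justified by saying the coagulation term in \eqref{MBn} is non-positive ``as in \eqref{F}''. That non-positivity requires $\mu^n_s$ to be a non-negative measure, and in the construction non-negativity is not yet available at that stage: the Picard iterates are a priori signed elements of $\cV_\infty$, and the paper proves $\mu^n_t\ge0$ precisely by first establishing the $\tilde P^n$-representation --- via an auxiliary iteration $\tilde\mu^{n,k}_t$ whose terms are manifestly non-negative --- and then identifying $\tilde\mu^n_t$ with $\mu^n_t$ through uniqueness for the mild equation \eqref{E'}. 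So in the paper the propagator representation is an \emph{ingredient} in proving non-negativity, not a consequence of it; your ordering would be circular if grafted onto the construction (the uniform bound on $c^n$ that the paper actually uses at that stage comes from the iteration estimate $\|w\mu^n_t\|_\infty+\|w\lambda^n_t\|_\infty\le2C$, not from \eqref{MBn}). Within its limited scope, your comparison of $\nu_t=\tilde P^n_{t0}\mu^n_0+\int_0^t\tilde P^n_{ts}K^+_n(\mu^n_s)\,ds$ with $\mu^n_t$ via Proposition \ref{WSS} and a Gronwall argument is sound and essentially the identification the paper performs between \eqref{E'} and \eqref{AE1}; but the existence proof, the non-negativity argument, equation \eqref{MBn}, equation \eqref{AE2} and the global-in-time extension all still need to be supplied.
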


The final term in \eqref{MBn} is interpreted by analogy with
\eqref{F}, so is well-defined and non-positive.  The inequality
obtained by dropping this term
$$
\<w,\mu^n_t\>+\eta^n_t\le\<w,P_t\mu_0\>\q\text{a.e.}
$$
ensures that the integrals in equations \eqref{AE1}, \eqref{AE2} are well-defined as signed measures of finite total variation on $\R^d\times E$.
The new system of equations is designed so that $(\mu_t^n)_{t\ge0}$ should approximate the solution to \eqref{CD},
while $(\lambda_t^n)_{t\ge0}$ allows us to estimate the behaviour of particles of small or large mass.
The equations can be interpreted as follows: there are $\mu$-particles and $\lambda$-particles; $\mu$-particles all have type in $E_n$ and
coagulate as normal to produce new $\mu$-particles, except where the mass of the new particle would be greater than $n$, when it is
designated a $\lambda$-particle; $\lambda$-particles also act on $\mu$-particles, turning them into $\lambda$-particles. 
All particles diffuse at a speed determined by their type, as before.
\begin{lemma}\label{L2}
For all $n\in {\N}$ and all $t \geq 0$,
\begin{equation}\label{Z}
\mu^n_t\le\mu^{n+1}_t,\q\<w,\mu^n_t\>+\eta^n_t\ge\<w,\mu^{n+1}_t\>+\eta^{n+1}_t\q\text{a.e.} 
\end{equation}
Moreover, for all solutions $(\mu_t)_{t<T}$ to \eqref{CD} starting from $\mu_0$, for all
$n\in\N$ and $t<T$,
\begin{equation}\label{Y}
\mu^n_t\le\mu_t,\q\<w,\mu^n_t\>+\eta^n_t\ge\<w,\mu_t\>\q\text{a.e.}
\end{equation}
\end{lemma}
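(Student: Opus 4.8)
The plan is to derive \eqref{Z} and \eqref{Y} from a single comparison principle for the killed propagators, the two inequalities in each display being forced to hold together. Fix $n$. By the equivalence established in Propositions \ref{WSS} and \ref{MSS}, valid on each $E_n$ (where $a$, $a^{-1}$ and the relevant killing rate are bounded), both $(\mu^n_t)$ and the restriction of any solution $(\mu_t)$ satisfy equations of the propagator form $\nu_t=P^c_{t0}\alpha+\int_0^t P^c_{ts}f_s\,ds$, where $P^c_{ts}$ transports mass by Brownian motion weighted by $\exp\{-\int_s^t c\,d\tau\}$. From this representation $\nu_t$ is nondecreasing in the initial measure $\alpha\ge0$ and in the source $f\ge0$, and nonincreasing in the killing rate $c\ge0$; hence ordered data $\alpha\le\alpha'$, $f\le f'$, $c\ge c'$ yield $\nu_t\le\nu'_t$. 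On $E_n$ the measure $(\mu^n_t)$ carries data $\mu^n_0=1_{E_n}\mu_0$, source $K^+_n(\mu^n_s)$ and killing $c^n_s$ as in Lemma \ref{L1}, while $1_{E_n}\mu_t$ carries the same initial datum $\mu^n_0$, source $K^+_n(\mu_s)$ and killing $b_s(x,y)=\int_E K(y,y',E)\mu_s(x,dy')$.

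The decisive point is that the two inequalities in \eqref{Y} are inseparable. On the one hand, since $K(y,y',E)\le w(y)w(y')$, whenever $\mu^n_t\le\mu_t$ we have
$$c^n_t(x,y)-b_t(x,y)=w(y)\eta^n_t(x)-\int_E K(y,y',E)(\mu_t-\mu^n_t)(x,dy')\ge w(y)\big(\eta^n_t(x)-\<w,\mu_t-\mu^n_t\>(x)\big),$$
so the $w$-mass inequality $\<w,\mu_t\>\le\<w,\mu^n_t\>+\eta^n_t$ is exactly what makes $c^n_t\ge b_t$, which together with $K^+_n(\mu^n_s)\le K^+_n(\mu_s)$ delivers $\mu^n_t\le\mu_t$ through the comparison principle. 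On the other hand, subtracting the balance \eqref{MBn} from the inequality \eqref{B1} reduces the $w$-mass inequality to $\<w,P_{t-s}K(\mu_s)\>\le\<w,P_{t-s}K_n(\mu^n_s)\>$, that is, to the $w$-loss through $K$ for $\mu$ dominating that through $K_n$ for $\mu^n$; this holds once $\mu^n_s\le\mu_s$, because the coagulation loss is a quadratic form in the measure with the kernel $w^{t,x,x'}(y)+w^{t,x,x'}(y')-w^{t,x,x'}(z)$, which is nonnegative since $w^{t,x,x'}$ is $K$-subadditive, so the loss is nondecreasing in the measure and in the kernel, while $K_n\le K$. The display \eqref{Z} is governed by the identical coupling, with $\mu^{n+1}$ in place of $\mu$, $K_{n+1}\ge K_n$ in place of $K$, and \eqref{MBn} for $n+1$ in place of \eqref{B1}.

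To break this circularity I would run both inequalities through a single induction. The cleanest device is a Gronwall estimate for a combined defect functional measuring, in $w$-weighted total mass, the simultaneous failure of $\mu^n_s\le\mu_s$ and of $\<w,\mu_s\>\le\<w,\mu^n_s\>+\eta^n_s$ for $s\le t$. Both defects vanish at $t=0$, where $\mu^n_0=1_{E_n}\mu_0$ and $\<w,\mu_0\>=\<w,\mu^n_0\>+\eta^n_0$. The coupling above, the monotonicity of the loss functional, and the a priori bound $\<w,\mu^n_t\>+\eta^n_t\le\<w,P_t\mu_0\>$ from Lemma \ref{L1} let one bound the defect at time $t$ by a constant multiple of its time integral over $[0,t]$; Gronwall then forces it to vanish, giving \eqref{Y}, and the same estimate applied to the pair $(\mu^n,\mu^{n+1})$ gives \eqref{Z}. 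Equivalently, one may compare the monotone iterates constructing $\mu^n$ in Lemma \ref{L1}, carrying both inequalities along the iteration and passing to the limit.

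The main obstacle is the competition hidden in the killing rate: enlarging the cloud increases the genuine coagulation-loss part $\int_E K(y,y',E)\mu^n_t(x,dy')$ of $c^n_t$ but, through the bookkeeping term $w(y)\eta^n_t(x)$, pushes the total killing the other way, so no direct comparison of $(\mu^n_t)$ with a solution is available and the measure and $w$-mass inequalities must be transported together. Technically the work concentrates in controlling the negative part of the signed measure $\mu^n_t-\mu_t$ through the quadratic source in the Gronwall estimate, where the nonnegativity of the $K$-subadditive loss kernel and the uniform $w$-bounds of Lemma \ref{L1} are used decisively; the remaining points, namely the almost-everywhere choice of densities and the interchange of limits with the time integrals, are routine.
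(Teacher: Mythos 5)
Your proposal is correct and follows essentially the same route as the paper: both inequalities in \eqref{Y} (and \eqref{Z}) are propagated simultaneously by linearizing around the pair of defects $(1_{E_n}\mu_t-\mu^n_t,\ \<w,\mu^n_t\>+\eta^n_t-\<w,\mu_t\>)$, using exactly the two positivity facts you isolate ($K(y,y',E)\le w(y)w(y')$ to control the killing-rate difference, and $K$-subadditivity of $w^{t,x,x'}$ to control the $w$-loss difference), together with the killed-propagator representation from Propositions \ref{WSS} and \ref{MSS}. The paper implements the final step by exhibiting the defect pair as the unique solution of a linear integral system with positivity-preserving coefficients and non-negative sources, obtained as a limit of non-negative Picard iterates started from zero, which is equivalent to your Gronwall estimate on the negative parts (your alternative suggestion of comparing the non-monotone Picard iterates of Lemma \ref{L1} directly is the one part I would not pursue).
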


\begin{proof}[Proof of Theorem \ref{T1}]
Define a process $(\mu_t)_{t\ge0}$ in $\cM$ and a process of non-negative measurable functions $(\eta_t)_{t\ge0}$ by the monotone limits
$$
\mu_t=\lim_{n\to\infty}\mu^n_t,\q\eta_t=\lim_{n\to\infty}\eta^n_t\q\text{a.e.}
$$
Note that
$$
\<w,\mu_t\>+\eta_t\le\<w,P_t\mu_0\>\le\<w,\mu^*_0\>\q\text{a.e.}
$$
In particular, $\|\<w,\mu_t\>\|_1\le\|\<w,\mu_0\>\|_1<\infty$, so $(\mu_t)_{t\ge 0}$ satisfies \eqref{S1}.
Next
\begin{align*}
\|\int^t_0\<1,P_{t-s}K^-(\mu_s)\>ds\|_1
&=\int^t_0\|\<1,K^-(\mu_s)\>\|_1ds\\ 
&\le\int^t_0\|\<w,\mu_s\>^2\|_1ds
\le\|\<w,\mu_0\>\|_1\<w,\mu_0^*\>t<\infty.
\end{align*}
This estimate and other similar estimates allow us to use dominated convergence to pass to the limit in equation \eqref{AE1} to obtain
$$
\mu_t + \int^t_0 P_{t-s} (K^-(\mu_s) +\eta_s w \mu_s) ds = P_t\mu_0
+\int^t_0 P_{t-s} K^+ (\mu_s)\, ds.
$$
Hence if $\eta_t=0$ almost everywhere, for all $t<T$, then $(\mu_t)_{t<T}$ satisfies \eqref{WCD}.

Now suppose that $(\nu_t)_{t<T}$ is any solution to \eqref{CD} starting from $\mu_0$. 
By Lemma \ref{L2}, for $t<T$,
$$
\mu_t\le\nu_t,\q\<w,\mu_t\>+\eta_t\le\<w,\nu_t\>,\q\text{a.e.}
$$
Since $w$ is positive, if $\eta_t = 0$ almost everywhere, then $\nu_t = \mu_t$, for all $t<T$.  
In the case where $(\nu_t)_{t<T}$ is strong we have
$$
\int^t_0 \|\l w^2, \mu_s\r \|_\infty \, ds \leq \int^t_0 \|\l w^2, \nu_s\r \|_\infty ds < \infty, \quad t<T.
$$
So we can multiply \eqref{AE2} by $w$, integrate over $E$, and pass to the limit $n\to\infty$, using dominated convergence, to obtain
$$
\eta_t=\int^t_0\<w^2,P_{t-s}(\eta_s\mu_s)\>ds\q\text{a.e}.
$$
Then
$$
\|\eta_t\|_1 = \int^t_0 \| \l w^2, \eta_s \mu_s\r \|_1 ds \leq \int^t_0 \|\l w^2, \mu_s \r \|_\infty \| \eta_s \|_1 ds .
$$
Since $\|\eta_t\|_1$ is non-decreasing in $t$ and finite, this implies $\|\eta_t\|_1=0$, so $\eta_t =0$ almost everywhere, so $\nu_t=\mu_t$, for all $t<T$. 
Thus, while any strong solution persists, it is the only solution.  

We will now show that
\begin{equation}\label{SS}
\int^t_0\|\<w^2,\mu_s\>\|_\infty ds<\infty,\q t<\a^{-1},
\end{equation}
which by the preceding argument implies that $(\mu_t)_{t<\a^{-1}}$ is a strong solution.
Apply $P_s$ to equation \eqref{AE1}, multiply by $w^2$ and integrate over $E$ to obtain, for all $s,t \geq 0$ and all $n\in {\N}$, 
\begin{equation}\label{W2}
\l w^2, P_s \mu^n_t\r \leq \l w^2, P_{s+t}\mu^n_0 \r + \int^t_0 \l w^2, P_{s+t-r} K_n (\mu^n_r)\r dr\q\text{a.e.}
\end{equation}
Set $h_n(t) = \sup_{s\geq 0} \|\l w^2, P_s \mu^n_t \r\|_\infty$. 
For all $t>0$ and $x,x'\in \R^d$, for  $p(y)=p^{t,x,x'}(y) = p(a(y)t,x,x')$, both $w p$ and $w$ are $K$-subadditive.  
So, for $K(y,y',.)$-almost all $z$,
$$
w^2(z)p(z) - w^2(y)p(y)-w^2 (y') p(y')
\leq w(y) p(y) w(y') + w(y) p(y') w(y').
$$
Hence, for any $v$ satisfying \eqref{VW},
\begin{align*}
\l w^2,&P_{s+t-r} K_n (\mu^n_r)\r(x)\\
&\leq  \int_{\R^{d}\times E\times E} w(y) p^{s+t-r,x,x'} 
(y)w(y') K_n (y,y',E) \mu^n_r (x',dy) \mu^n_r (x',dy')\, dx'\\
&\leq  \|\l w^2p^{s+t-r,x,.}, \mu^n_r \r\l w v, \mu^n_r\r 
  + \l w v p^{s+t-r,x,.}, \mu^n_r\r\l w^2,\mu^n_r\r\|_1\\
&\leq  \|\l w v, \mu^n_r\r \|_\infty \l w^2, P_{s+t-r} \mu^n_r\r(x)
+  \|\l w^2, \mu^n_r\r \|_\infty \l w v, P_{s+t-r}\mu^n_r\r (x).
\end{align*}
Note that
$$
\sup_{s\geq 0} \|\l w^2, P_{s+t} \mu_0\r \|_\infty \leq \l w^2,
\mu^*_0\r=\alpha.
$$
On taking $v=w/2$ we obtain
$$
h_n(t) \leq \alpha + \int^t_0 h_n (r)^2 dr
$$
which implies $h_n(t) \leq (T-t)^{-1}$ for $t<T \equiv \alpha^{-1}$. This
bound is independent of $n$, so we obtain \eqref{SS} on letting $n\to\infty$.  

On the other hand, suppose that $0\leq v\leq Cw$ and that the function $a^{-d/2}w v$ is $K$-subadditive.
We can replace $w^2$ by $vw$ in \eqref{W2}. The final term is then 
non-positive, as at \eqref{F}, so we obtain
$$
\|\l w v, P_s \mu_t^n\r\|_\infty \leq \|\l w v, P_{s+t} \mu_0^n\r\|_\infty 
\leq C\|\l w^2, P_{s+t} \mu_0^n\r\|_\infty
\leq C\alpha.
$$ 
Hence
$$
h_n(t) \leq \alpha + 2C\alpha \int^t_0 h_n (r) \, dr .
$$
which implies $h_n(t) \leq \alpha e^{2C\alpha t}$ for all $t\geq0$.
This bound is independent of $n$, so, if $\a<\infty$, then $\zeta (\mu_0) = \infty$.

To prove (iii), note that, by Lemma \ref{L2}, $c^n_t(x,y)\da c_t(x,y)$ for almost all $x$, 
for all $y$ and $t<T$, so $\tilde P^n_{ts}\ua P^\mu_{ts}$ for all $s\le t<T$.
From Lemma \ref{L1}, we have
$$
\mu_t^n=\tilde P^n_{t0}\mu^n_0+\int_0^t\tilde P^n_{ts}K^+(\mu^n_s)ds,\q t\ge 0.
$$
So we can let $n\to\infty$ to obtain
$$
\mu_t=P^\mu_{t0}\mu_0+\int_0^tP^\mu_{ts}K^+(\mu_s)ds,\q t<T.
$$

It remains to prove (iv). 
Set $m_R(y) = m(y) 1_{m(y)\leq R}$. 
Then, for $t<\zeta(\mu_0)$, 
\begin{equation}\label{MC}
\|\l m_R, \mu_t \r\|_1 = \|\l m_R, \mu_0 \r \|_1 
+ \int^t_0 \int_{\R^{d} \times E \times E} l_R (y,y') K(y,y',E)
\mu_s (x,dy)\mu_s (x,dy')dxds 
\end{equation}
where 
$$
l_R(y,y') = (m(y) + m(y')) 1_{m(y)+m(y') \leq R} - m_R(y) - m_R(y'). 
$$
Now $l_R(y,y')\to 0$ as $R\to\infty$ and
$$
|l_R (y,y')| K(y,y',E)\leq (m(y)+ m(y'))w(y)w(y').
$$
So, if we can show 
\begin{equation}\label{MCE}
\int_0^t\|\<mw,\mu_s\>\<w,\mu_s\>\|_1ds<\infty
\end{equation}
then, by dominated convergence, the last term in \eqref{MC} tends to $0$ as
$R\to\infty$.  Hence $\|\l m, \mu_t\r\|_1 = \|\l m, \mu_0\r\|_1$ for
all $t<\zeta(\mu_0)$ as required.

Note that
$$
\|\<mw,\mu^n_t\>\|_1\le \|\<mw,\mu^n_0\>\|_1
+\int_0^t\int_{\R^d}\<mw,K_n(\mu_s^n)\>(x)dxds.
$$
Both $m$ and $w$ are $K$-subadditive, so for $K(y,y',.)$-almost all $z$,
$$
(mw)(z)-(mw)(y)-(mw)(y')\le m(y)w(y')+m(y')w(y).
$$
Hence
$$
\<mw,K_n(\mu_s^n)\>\le\<w^2,\mu_s^n\>\<mw,\mu_s^n\>.
$$
Set $k_n(t)=\|\<mw,\mu_t^n\>\|_1$. Then $k_n(0)=\|\<mw,\mu_0\>\|_1<\infty$,
$k_n(t)<\infty$ for all $n$ and $t$, and, for $s\le t<\zeta(\mu_0)$,
$$
k_n(t)\le k_n(0)+\int_0^s\|\<w^2,\mu_r\>\|_\infty k_n(r)dr.
$$
Hence, using Gronwall's lemma and then letting $n\to\infty$ we obtain
$$
\sup_{s\le t}\|\<mw,\mu_t\>\|_1<\infty
$$

\end{proof}

\begin{proof}[Proof of Lemma \ref{L1}]
It will be convenient to assume that $\l w,\mu^*_0\r=1$. The general case then follows by a scaling argument.

Consider the vector space $\cV_\infty$ of signed measures $\mu=\mu^+-\mu^-$ on $\R^d\times E$ such that $\mu^\pm\in\cM$ and
$$
\|\mu\|_\infty=\||\mu|(.,E)\|_\infty<\infty.
$$
Here $|\mu|=\mu^++\mu^-$ is the total variation measure of $\mu$, 
$(|\mu|(x,A):x\in\R^d,A\in\cE)$ is (a version of) the kernel for $|\mu|$ with respect to Lebesgue measure on $\R^d$, 
and $\|.\|_\infty$ (on the right) is the $L^\infty$-norm on measurable functions on $\R^d$.
Then $\cV_\infty$ is complete in the given norm.
Given $\mu\in\cV_\infty$ and a (suitably integrable) measurable function $f$ on $E$, 
we obtain another signed measure $f\mu$ on $\R^d\times E$ by multiplication. 
We write $\<f,\mu\>$ for the measurable function on $\R^d$ given by
$$
\<f,\mu\>(x)=\int_Ef(y)\mu(x,dy).
$$
Thus $\<f,\mu\>$ is determined only almost everywhere on $\R^d$, and we have
$$
(f\mu)(B\times E)=\int_B\<f,\mu\>(x)dx,\q
\|f\mu\|_\infty=\|\<|f|,|\mu|\>\|_\infty.
$$

Extend $K^\pm_n, K_n, \tilde{K}^-_n$ and $P_t$ to $\cV_\infty$ in the obvious way.  
Note that, for $\mu,\mu' \in \cV_\infty$ with $w\mu,w\mu'\in\cV_\infty$, we have $K^-(\mu),K^-(\mu')\in\cV_\infty$ and
$$
\|K^-_n(\mu)\|_\infty\le\|w\mu\|^2_\infty
$$
and
$$
\|K^-_n(\mu)-K^-_n(\mu')\|_\infty\le\|w(\mu-\mu')\|_\infty\| w(\mu+\mu')\|_\infty.
$$
Similar estimates hold for $K^+_n, K_n$ and $\tilde K^-_n$.  
Write $C$ for a finite constant, depending only on $a,w$ and $n$, whose value may vary from line to line.
Note that, if $\mu\in\cV_\infty$ is supported on $\R^d\times E_n$, then $w\mu\in\cV_\infty$ and
$$
\|w\mu\|_\infty\le C\|\mu\|_\infty,\q\|P_t\mu\|_\infty\le C\|\mu\|_\infty
$$
and moreover the signed measures $K^\pm_n(\mu)$ and $\tilde K^-_n(\mu)$ are also supported on $\R^d\times E_n$.

Set $\mu^n_0=1_{E_{n}} \mu_0$ and $\lambda^n_0=1_{E^{c}_{n}}\mu_0$. 
We can define a sequence of measurable maps 
$$
t\mapsto(\mu^{n,k}_t,\lambda^{n,k}_t):[0,\infty)\to\cV_\infty\times\cV_\infty,\q k\ge0
$$
with $\mu^{n,k}_t$ supported on $\R^d\times E_n$ for all $k$ and $t$, by setting $\mu^{n,0}_t=P_t\mu^n_0$ and $\lambda^{n,0}_t=P_t\lambda^n_0$,
and then recursively setting $\eta^{n,k}_t=\<w,\lambda^{n,k}_t\>$ and
\begin{align*}
\mu^{n,k+1}_t
&=P_t\mu^n_0+\int^t_0P_{t-s}(K^+_n(\mu^{n,k}_s)-K^-(\mu^{n,k}_s)-\eta^{n,k}_s w \mu^{n,k}_s)ds\\
\lambda^{n,k+1}_t
&=P_t\lambda^n_0+\int^t_0P_{t-s}(\tilde K^-_n(\mu^{n,k}_s)+\eta^{n,k}_sw\mu^{n,k}_s)ds.
\end{align*}
Fix $n$ and set 
$$
f_k(t)=\|w\mu^{n,k}_t\|_\infty+\|w\lambda^{n,k}_t\|_\infty.
$$
Then $f_0(t)=\|\l w,P_t\mu_0\r\|_\infty \leq \l w, \mu^*_0\r=1$ and, for $k \geq 0$,
$$
f_{k+1}(t)\le1+C\int^t_0f_k(s)^2ds,\q t\ge0.
$$
Hence $f_k(t) \leq  (1-Ct)^{-1}$ for $t<C^{-1}$. Set $T= (2C)^{-1}$ then
$f_k(t) \leq 2C$ for $t \leq T$.  Next, set $g_0(t)=f_0(t)$ and, for
$k \geq 0$,
$$
g_{k+1}(t) = \| w(\mu^{n,k+1}_t - \mu^{n,k}_t) \|_\infty + \| w 
(\lambda^{n,k+1}_t - \lambda^{n,k}_t)\|_\infty.
$$
Then, for $t \leq T$ and $k \geq 0$,
$$
g_{k+1}(t) \leq C \int^t_0 g_k (s)\, ds. 
$$
Hence, by a standard argument, $(w\mu^{n,k}_t, w\lambda^{n,k}_t)$ converges in $\cV_\infty \times \cV_\infty$ as $k\to \infty$, uniformly in $t \leq T$.  
The limit $(w\mu^n_t, w\lambda^n_t)_{t\le T}$ is a measurable map $[0,T]\to \cV_\infty \times \cV_\infty$, 
with $\mu^n_t$ supported on $\R^d\times E_n$ for all $t$, such that $\|w\mu_t^n\|_\infty+\|w\lambda_t^n\|_\infty\le2C$ and, setting $\eta^n_s=\<w,\lambda^n_s\>$,  
\begin{align}\label{BE1}
\mu^n_t 
&=P_t\mu^n_0+\int^t_0P_{t-s}(K^+_n(\mu^n_s)-K^-(\mu^n_s)-\eta^n_sw \mu^n_s)ds\\ 
\label{BE2}
\lambda^n_t
&=P_t\lambda^n_0+\int^t_0P_{t-s}(\tilde K^-_n(\mu^n_s)+\eta^n_sw\mu^n_s)ds.
\end{align}
On disintegrating these equations with respect to Lebesgue measure, adding them together, multiplying by $w$ and integrating over $E$, we obtain \eqref{MBn}.

For $u>0$, if we act first on \eqref{BE1} and \eqref{BE2} by $P_u$, we obtain instead
\begin{equation}\label{MBU}
\<w,P_u(\mu^n_t+\lambda^n_t)\>=\<w,P_{u+t}\mu_0\>+\int^t_0\<w,P_{u+t-s}K_n(\mu^n_s)\>ds.
\end{equation}
Assume for now that $\mu_t^n$ and $\lambda_t^n$ are non-negative for $t\le T$.
Then, the last term in \eqref{MBU} is non-positive, as in \eqref{F}. 
In particular, if $\tilde{\mu}^n_0 = \mu^n_T$ and $\tilde{\lambda}^n_0=\lambda^n_T$, then
$$
\tilde{f_0}(t):  = \| w P_t\tilde{\mu}^n_0 \|_\infty + 
\| w P_t \tilde{\lambda}^n_0 \|_\infty 
= \|\l w, P_t (\mu^n_T + \lambda^n_T)\r \|_\infty 
\leq \|\l w, P_{t+T} \mu_0\r\|_\infty \leq 1.
$$
The construction we have just made can therefore be applied with 
$\tilde{\mu}^n_0,
\tilde{\lambda}^n_0$ in place of $\mu^n_0, \lambda^n_0$ to obtain
$(\tilde{\mu}^n_t, \tilde{\lambda}^n_t)_{t \leq T}$. Then, setting
$\mu^n_{T+t} = \tilde{\mu}^n_t$ and $\lambda^n_{T+t}= \tilde{\lambda}^n_t$
for $t \leq T$, $(\mu^n_t, \lambda^n_t)_{t \leq 2T}$ satisfies \eqref{BE1},
\eqref{BE2}. By repeated extension we obtain a long-time solution.  

It remains to show that $\mu^n_t$ and $\lambda_t^n$ are non-negative for $t\le T$.  
Define $c^n_t$ and $\tilde P^n_{ts}$ for $s\le t\le T$ as in the statement.
Note that, for all $y\in E_n$ and all $t\le T$, we have
$$
|c^n_t(x,y)|\le w(y)\<w,|\mu^n_t|+|\lambda^n_t|\>(x)\le 2C\|w1_{E_{n}}\|_\infty,\q\text{a.e.}
$$
If $\nu$ is supported on $\R^d\times E_n$, then we have
$$
\|\tilde P^n_{ts}\nu\|_\infty\le C\|\nu\|_\infty.
$$
Set $\tilde{\mu}^{n,0}_t = \tilde{P}^n_{t0} \mu^n_0$ and define for $k \geq 0$
$$
\tilde{\mu}^{n,k+1}_t = \tilde{P}^n_{t0} \mu^n_0 + \int^t_0 \tilde{P}^n_{ts} K^+_n (\tilde{\mu}^{n,k}_s)ds.
$$
The arguments used above show, possibly for some smaller value of $T$, but independent of $\mu^n_0$ and $\lambda^n_0$, that $\tilde{\mu}^{n,k}_t$ converges in $\cV_\infty$, uniformly in $t \leq T$.  
The limit $(\tilde\mu^n_t)_{t\le T}$ is a process in $\cV_\infty$
with $\tilde\mu_t^n$ supported on $\R^d\times E_n$ for $t\le T$, such that
\begin{equation}\label{E}
\tilde\mu^n_t=\tilde P^n_{t0}\mu^n_0+\int_0^t\tilde P^n_{ts}K^+_n(\tilde\mu^n_s)ds,\q t\le T.
\end{equation}
By induction, we see that $\tilde\mu^{n,k}_t\ge0$ for all $k$, so $\tilde\mu_t^n\ge0$.  
Apply Proposition \ref{WSS} with $g_t=-c_t^n$ and $\a_t=K_n^+(\tilde\mu_t^n)$ to see that
\begin{equation}\label{E'}
\tilde\mu^n_t=P_t\mu^n_0+\int^t_0P_{t-s}(K^+_n(\tilde\mu^n_s)-c^n_s\tilde\mu^n_s)ds,\q t\le T. 
\end{equation}
Now $c^n_t\mu^n_t=K^-(\mu^n_t)+\eta^n_tw\mu^n_t$, so \eqref{E'} is also satisfied by $(\mu^n_t)_{t\le T}$.
The estimates we already have for $K^+_n$ and $c^n_t$ allow us to prove uniqueness for \eqref{E'}.
Hence $\mu^n_t=\tilde\mu_t^n\ge0$ for all $t\le T$.
A similar argument shows that $\lambda^n_t\ge0$ for all $t\le T$.  
\end{proof}
\begin{proof}[Proof of Lemma \ref{L2}]
Recall that, for all $n\in\N$, we have
\begin{equation}\label{AA}
\mu^n_t=P_t\mu^n_0+\int^t_0P_{t-s}(K^+_n(\mu^n_s)-c^n_s\mu^n_s)ds,\q t\ge0
\end{equation}
and
\begin{equation}\label{BZ}
\mu^{n+1}_t=P_t\mu^{n+1}_0+\int^t_0P_{t-s}(K^+_{n+1}(\mu^n_s)-c^{n+1}_s\mu^{n+1}_s)ds,\q t\ge0
\end{equation}
and
\begin{equation}\label{BY}
\mu_t=P_t\mu_0+\int^t_0P_{t-s}(K^+(\mu_s)-c_s\mu_s)ds,\q t<T 
\end{equation}
where
$$
c_s(x,y)=\int_EK(y,y',E)\mu_s(x,dy').
$$
Set $\pi^n_t=1_{E_n}\mu^{n+1}_t-\mu^n_t$ and $\pi_t=1_{E_n}\mu_t-\mu^n_t$, and note that $\pi^n_0 = \pi_0 = 0$.  
Set
\begin{equation}\label{CZ}
\rho^n_t=\int^t_0\<w,P_{t-s}(K_n(\mu^n_s)-K_{n+1}(\mu^{n+1}_s))\>ds,\q
\rho_t=\int^t_0\<w,P_{t-s}(K_n(\mu^n_s)-K(\mu_s))\>ds
\end{equation}
and set $\chi_t=\<w,\mu^n_t\>+\eta^n_t-\<w,\mu_t\>$.
By \eqref{MBn} and \eqref{B1} we have
$$
\rho^n_t=\<w,\mu^n_t\>+\eta^n_t-\<w,\mu^{n+1}_t\>-\eta^{n+1}_t,\q\rho_t\le\chi_t,\q\text{a.e}.
$$
Now, for all $y\in E$ and all $t$,
\begin{align*}
(c^n_t - c^{n+1}_t) (x,y) 
&=\int_E (w(y)w(y')-K(y,y',E)) \pi^n_t (x,dy) +w(y) \rho^n_t(x)+\gamma^n_t (x,y),\q\text{a.e.}\\
(c^n_t - c_t)(x,y) 
&=\int_E (w(y)w(y')-K(y,y',E)) \pi_t (x,dy) + w(y)\rho_t(x)+\gamma_t (x,y),\q\text{a.e.}
\end{align*}
where
\begin{align*}
\gamma^n_t(x,y)
&=\int_E(w(y)w(y')-K(y,y',E))1_{E\sm E_n}(y')\mu^{n+1}_t(x,dy')\\
\gamma_t(x,y)
&=\int_E(w(y)w(y')-K(y,y',E))1_{E\sm E_n}(y')\mu_t(x,dy')+w(y)(\chi_t-\rho_t)(x).
\end{align*}
Note that $0\le\gamma^n_t(x,y)\le Cw(y)$ and $0\le\gamma_t(x,y)\le Cw(y)$ for all $y\in E$, for almost all $x$. 
Multiply equations \eqref{BZ} and \eqref{BY} by $1_{E_n}$ and subtract equation \eqref{AA} to obtain
$$
\pi^n_t=\int^t_0P_{t-s}(A^n_s(\pi^n_s,\rho^n_s)-c_s^n\pi_s^n+\a^n_s)ds,\q\pi_t=\int^t_0P_{t-s}(A_s(\pi_s,\rho_s)-c_s^n\pi_s+\alpha_s)ds
$$
where, writing $K^+_n(.,.)$ for the polarization of $K^+_n$, for $\pi\in\cV_\infty$ and $\rho\in L^\infty(\R^d)$,
\begin{align*}
A^n_s(\pi,\rho)
&=K^+_n(\pi,\mu^{n+1}_s+\mu^n_s)+1_{E_n}\left(\int_E(w(\cdot)w(y')-K(\cdot,y',E))\pi(\cdot,dy')+w\rho\right)\mu_s^{n+1},\\
A_s(\pi,\rho)
&=K^+_n(\pi,\mu_s+\mu^n_s)+1_{E_n}\left(\int_E(w(\cdot)w(y')-K(\cdot,y',E))\pi(\cdot,dy')+w\rho\right)\mu_s
\end{align*}
and
$$
\a^n_s=K^+_n(1_{E\sm E_n}\mu_s^{n+1},\mu_s^{n+1}+\mu_s^n)+1_{E_n}\g_s^n\mu_s^{n+1},\q
\a_s=K^+_n(1_{E\sm E_n}\mu_s,\mu_s+\mu_s^n)+1_{E_n}\g_s\mu_s.
$$
Note that $\alpha^n_s \geq 0$, $\alpha_s \geq 0$ and $\|\alpha^n_s\|_\infty \leq C$,  $\|\alpha_s\|_\infty \leq C$.  
Also, if $\pi$ and $\rho$ are non-negative and $\pi$ is supported on $\R^d\times E_n$, then
$A^n_s(\pi,\rho)\geq 0$ and $A_s(\pi,\rho) \geq 0$, and
$$
\|A^n_s(\pi,\rho)\|_\infty\le C(\|\pi\|_\infty+\|\rho\|_\infty),\q
\|A_s(\pi,\rho)\|_\infty\le C(\|\pi\|_\infty+\|\rho\|_\infty).
$$
We can rewrite the equations \eqref{CZ} in the form
$$
\rho^n_t =  \int^t_0 \l w, P_{t-s} (B^n_s (\pi^n_s) + \beta^n_s)\r ds, \q
\rho_t =  \int^t_0 \l w, P_{t-s} (B_s (\pi_s) + \beta_s)\r ds, 
$$
where
$$
B^n_s (\pi) = - K_n(\pi, \mu^n_s + \mu^{n+1}_s),\quad B_s (\pi) = -K_n
(\pi, \mu^n_s +\mu_s)
$$
and
\begin{align*}
\beta^n_s
&=-(K_{n+1}-K_n)(\mu^{n+1}_s)-K_n(1_{E\sm E_n}\mu^{n+1}_s,\mu^n_s+\mu^{n+1}_s),\\
\beta_s
&=-(K-K_n)(\mu_s)-K_n(1_{E\sm E_n}\mu_s,\mu^n_s+\mu_s).
\end{align*}
Since $\tilde w^{t-s,x,x'}$ is $K$-subadditive (see \eqref{F}), we have
$$
\l w, P_{t-s} \beta^n_s\r \geq 0, \quad \l w, P_{t-s} \beta_s \r \geq 0
$$
and, for $\pi \geq 0$,
$$
\l w, P_{t-s} B^n_s (\pi)\r \geq 0, 
\quad \l w, P_{t-s} B_s (\pi) \r \geq 0 .
$$
Note that
$$
\|\<w,P_{t-s}\b^n_s\>\|_\infty\le C,\q\|\<w,P_{t-s}(\b_s+K(\mu_s))\>\|_\infty\le C
$$
and
$$
\|\int^t_0 \l w, P_{t-s} K(\mu_s)\r ds\|_\infty \leq \| \l w, P_t \mu_0 \r \|_\infty \leq C.
$$
From this point on we pursue only the proof of \eqref{Y}.  
The argument for \eqref{Z} is the same.  
Define, recursively for $k\ge0$, a process of measures $(\tilde\pi_t^k)_{t<T}$ supported on $\R^d\times E_n$ 
and a process of non-negative measurable functions $(\tilde\rho_t^k)_{t<T}$ on $\R^d$
by setting $\tilde{\pi}^0_t=0$, $\tilde{\rho}^0_t=0$ and then
$$
\tilde\pi^{k+1}_t=\int^t_0\tilde P_{ts}^n(A_s(\tilde\pi^k_s,\tilde\rho^k_s)+\alpha_s)ds,\q
\tilde\rho^{k+1}_t=\int^t_0\<w,P_{t-s}(B_s(\tilde\pi^k_s)+\beta_s)\>ds. 
$$
The estimates obtained allow us to show that $(\tilde{\pi}^k_t, \tilde{\rho}^k_t)_{t<T}$ converges in $\cV_\infty\times L^\infty(\R^d)$, 
uniformly on compacts in $t$, and  that the limit $(\tilde\pi_t,\tilde\rho_t)_{t<T}$ is a measurable map
$$
t\mapsto(\tilde\pi_t,\tilde\rho_t):[0,T)\to\cV_\infty\times L^\infty(\R^d)
$$
with  $\tilde\pi_t$ supported on $E_n$ for all $t$, such that
\begin{align}
\tilde\pi_t\notag
&=\int^t_0\tilde P_{ts}^n(A_s(\tilde\pi_s,\tilde\rho_s)+\alpha_s)ds, \\
\tilde\rho_t\label{JF}
&=\int^t_0\<w,P_{t-s}(B_s(\tilde\pi_s)+\beta_s)\>ds. 
\end{align}
Apply Proposition \ref{WSS} with $g_t=-c_t^n$ and `$\a_t$'$=A_t(\tilde\pi_t,\tilde\rho_t)+\a_t$ to see that
\begin{equation}\label{JE}
\tilde\pi_t=\int^t_0P_{t-s}(A_s(\tilde\pi_s,\tilde\rho_s)-c_s^n\tilde\pi_s+\alpha_s)ds.
\end{equation}
But $(\pi_t,\rho_t)_{t<T}$ also satisfies the equations \eqref{JF},\eqref{JE} and our estimates allow us to 
show these equations have only one solution.
Hence $\pi_t =\tilde{\pi}_t \geq 0$ and $\rho_t = \tilde{\rho}_t \geq 0$ almost everywhere, for all $t<T$, which implies \eqref{Y}.
\end{proof}

We finish this section with a result which allows us to recover function solutions in the case $E=(0,\infty)$,
whenever the initial mass distribution is either discrete or absolutely continuous.
Say that a measure $\mu$ on $E$ has {\em integer mass distribution} if it is supported on the set $m^{-1}(\N)$.
Say that $\mu$ has {\em absolutely continuous mass distribution} if there is a kernel $\rho$ on $(0,\infty)\times\cE$ 
such that, $m(y)=m$ for $\rho(m,\cdot)$-almost all $y$, for all $m\in(0,\infty)$, and such that
$$
\mu(dy)=\int_{(0,\infty)}\rho(m,dy)dm
$$
where $dm$ denotes Lebesgue measure.
Extend these notions in the obvious way to measures on $\R^d\times E$ and to processes of such measures.

Given a suitable process $(\mu_t)_{t<T}$ in $\cM$, we defined the propagator $(P_{ts}^\mu:0\le s\le t<T)$ just before \eqref{MS}.
Consider now the equation 
\begin{equation}\label{NUE}
\nu_t=P_{t0}^\mu\mu_0+\int_0^t P_{ts}^\mu K^+(\nu_s)ds,\q t<T
\end{equation}
for a process $(\nu_t)_{t<T}$ in $\cM$.
This has a minimal non-negative solution, given by $\nu_t=\lim_{k\to\infty}\nu_t^k$, where
we set $\nu_t^0=P_{t0}^\mu\mu_0$ and define recursively, for $k\ge0$,
$$
\nu_t^{k+1}=P_{t0}^\mu\mu_0+\int_0^t P_{ts}^\mu K^+(\nu_s^k)ds,\q t<T.
$$

\begin{theorem}\label{IFU}
Assume that the diffusivity $a$ and the coagulation kernel $K$ satisfy condition \eqref{A} and that the initial measure $\mu_0$ satisfies condition \eqref{M0}.
Set $T=\z(\mu_0)$ and let $(\mu_t)_{t<T}$ be the maximal strong solution to \eqref{CD} starting from $\mu_0$, as in Theorem \ref{T1}. 
Then $(\mu_t)_{t<T}$ is also the minimal non-negative solution to \eqref{NUE}.
Moreover, if $\mu_0$ has integer mass distribution, then so does $(\mu_t)_{t<T}$. 
Further, if $\mu_0$ has absolutely continuous mass distribution, then so does $(\mu_t)_{t<T}$.
\end{theorem}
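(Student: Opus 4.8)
The plan is to handle the three assertions in order, the minimality being the analytic core and the two preservation statements following by soft inductions along the iteration that defines the minimal solution. For minimality, note first that by Theorem \ref{T1}(iii) the maximal strong solution $(\mu_t)_{t<T}$ is a Markov solution, so it satisfies \eqref{MS}, which is exactly \eqref{NUE} with $\nu=\mu$. Thus $\mu$ is itself a non-negative solution of \eqref{NUE}. Since $K^+$ is monotone and the iterates increase, an easy induction ($\nu^0_t=P^\mu_{t0}\mu_0\le\mu_t$, and $\nu^k\le\mu$ implies $\nu^{k+1}\le\mu$) gives $\nu_t=\lim_k\nu^k_t\le\mu_t$ for the minimal solution $\nu$.

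It then remains to prove the reverse inequality $\mu_t\le\nu_t$, and this is where the work lies. I set $\delta_t=\mu_t-\nu_t\ge0$ and subtract the two copies of \eqref{NUE}. Writing $K^+(\cdot,\cdot)$ for the symmetric bilinear form with $K^+(\kappa)=\tfrac12K^+(\kappa,\kappa)$, so that $K^+(\mu_s)-K^+(\nu_s)=\tfrac12K^+(\mu_s+\nu_s,\delta_s)$, and using $\nu\le\mu$ (hence $\mu+\nu\le2\mu$), the positivity of $K^+$, and the domination $P^\mu_{ts}\kappa\le P_{t-s}\kappa$ for $\kappa\ge0$, I obtain the homogeneous Volterra inequality
$$
0\le\delta_t\le\int_0^t P_{t-s}K^+(\delta_s,\mu_s)\,ds.
$$
The hard part is to close this, since a naive Gronwall estimate on $\|\langle w,\delta_t\rangle\|_1$ fails: the bound $K(y,y',E)\le w(y)w(y')$ from \eqref{A} forces a stray $\langle w^2,\delta_s\rangle$ factor. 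I therefore rerun the Riccati argument from the proof of Theorem \ref{T1}. Setting $h_\delta(t)=\sup_{u\ge0}\|\langle w^2,P_u\delta_t\rangle\|_\infty$ and $h_\mu$ analogously, I apply $P_u$, test against $w^2$, and exploit that for $p_r(y)=p(a(y)r,x,x')$ both $wp_r$ and $w$ are $K$-subadditive (as at \eqref{F}); the resulting cross terms are controlled by $\|\langle w^2,\mu_s\rangle\|_\infty\langle w^2,P_r\delta_s\rangle+\|\langle w^2,\delta_s\rangle\|_\infty\langle w^2,P_r\mu_s\rangle$, giving an inequality of the shape $h_\delta(t)\le C\int_0^t h_\mu(s)h_\delta(s)\,ds$. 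Because $\delta\le\mu$ we have $h_\delta\le h_\mu<\infty$ and $h_\delta(0)=0$ (as $\nu_0=\mu_0$), while the strong-solution bounds \eqref{S2} make $h_\mu$ locally integrable on $[0,T)$ (with the explicit bound $h_\mu(t)\le(T-t)^{-1}$ available on $[0,\alpha^{-1})$, extended by the maximality in Theorem \ref{T1}). Gronwall then forces $h_\delta\equiv0$, hence $\delta\equiv0$ and $\mu=\nu$.

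For the two preservation statements I argue along the iteration $\nu^k\uparrow\mu$, using that both building blocks respect the type/mass structure. The propagator $P^\mu_{ts}$ acts only in the spatial variable at fixed type $y$ (diffusion with killing), so it leaves the mass-pushforward dominated by that of its argument; and $K^+$ sends a measure whose type-marginal lives on $m^{-1}(\N)$ to another such measure, because $K$ is mass-preserving and $\N+\N\subseteq\N$. Starting from $\nu^0_t=P^\mu_{t0}\mu_0$, induction shows each $\nu^k_t$, and hence the increasing limit $\mu_t$, has integer mass distribution when $\mu_0$ does. For the absolutely continuous case I use the reformulation, valid on the standard space $(E,\cE)$ via disintegration, that a measure has absolutely continuous mass distribution iff its pushforward under $y\mapsto m(y)$ is absolutely continuous with respect to Lebesgue measure on $(0,\infty)$. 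Again $P^\mu_{ts}$ preserves this, and $K^+(\kappa)$ inherits it because the pushforward of $\kappa(x,dy)\kappa(x,dy')$ under $(y,y')\mapsto(m(y),m(y'))$ is the product $(m_*\kappa)\otimes(m_*\kappa)$, absolutely continuous on $(0,\infty)^2$, so its image under the sum map—which by mass-preservation of $K$ is precisely the mass-pushforward of $K^+(\kappa)$—is absolutely continuous on $(0,\infty)$. Finally, a monotone increasing limit of measures each absolutely continuous with respect to a fixed reference stays absolutely continuous, so the structure passes to $\mu_t=\lim_k\nu^k_t$. The only genuinely delicate step is the Riccati closure of the second paragraph; the remaining arguments are bookkeeping within the kernel and standard-space framework set up earlier.
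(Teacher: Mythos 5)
Your first step ($\nu_t\le\mu_t$ via Theorem \ref{T1}(iii) and induction along the iteration) and your treatment of the two mass-distribution statements agree with the paper. The reverse inequality $\mu_t\le\nu_t$ is where you diverge, and your argument there has a genuine gap. After bounding $P^\mu_{ts}\le P_{t-s}$ you are left with the pure gain inequality $\delta_t\le\int_0^tP_{t-s}K^+(\mu_s,\delta_s)\,ds$, and you then assert that testing against $w^2p$ and using the $K$-subadditivity of $wp$ and $w$ yields only cross terms, controlled by $\|\langle w^2,\mu_s\rangle\|_\infty\langle w^2,P_r\delta_s\rangle+\|\langle w^2,\delta_s\rangle\|_\infty\langle w^2,P_r\mu_s\rangle$. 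That is only true when the loss term is present: subadditivity gives
$$
w^2(z)p(z)\le w^2(y)p(y)+w^2(y')p(y')+w(y)p(y)w(y')+w(y)p(y')w(y'),
$$
and in the paper's estimate (proof of Theorem \ref{T1}, following \eqref{W2}) the two diagonal terms are cancelled against $-K^-$. In your difference equation they are not cancelled: multiplying $w^2(y)p(y)$ by $K(y,y',E)\le w(y)w(y')$ produces contributions of the form $\langle w^3p,\mu_s\rangle\langle w,\delta_s\rangle$, i.e.\ third moments in $w$, which are controlled neither by \eqref{S2} nor by anything in \eqref{A}. Keeping the killing inside $P^\mu$ does not rescue this, since the killing rate is built from $\mu$ alone and does not reproduce the needed $-K^-(\mu_s,\delta_s)$ compensation. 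This is precisely the obstruction that forces the paper's uniqueness argument in Theorem \ref{T1} to go through the auxiliary $(\lambda^n,\eta^n)$ bookkeeping rather than a direct Gronwall estimate on differences of solutions, so your Riccati closure does not close.

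The paper proves $\mu_t\le\nu_t$ with no estimation at all: it compares the iteration defining $\nu$ with the truncated iterations $\nu^{n,k}_t$ built from $\tilde P^n$ and $K^+$, whose increasing limits in $k$ are exactly the approximants $\mu^n_t$ of Lemma \ref{L1} by \eqref{E}; since $\tilde P^n_{ts}\le P^\mu_{ts}$ and the iterations are monotone, $\nu^{n,k}_t\le\nu^k_t$, hence $\mu^n_t\le\nu_t$ for every $n$, and letting $n\to\infty$ gives $\mu_t\le\nu_t$. If you want to salvage your write-up, replace the second paragraph by this monotone sandwich; the rest of your argument (the inequality $\nu\le\mu$ and the two inheritance statements by induction along the iteration, plus a monotone limit) is sound and matches the paper's.
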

\begin{proof}
We know that $(\mu_t)_{t<T}$ satisfies \eqref{NUE} by Theorem \ref{T1} (iii). 
We have $\nu_t^0=P_{t0}^\mu\mu_0\le\mu_t$ for all $t$.
By induction, $\nu_t^k\le\mu_t$ for all $k$, so $\nu_t\le\mu_t$ for all $t$.
Fix $n$ and define an increasing sequence of processes $(\nu^{n,k}_t)_{t\ge0}$ in $\cM$ by setting $\nu_t^{n,0}=\tilde P^n_{t0}\mu_0^n$ for all $t$ and then for $k\ge0$
$$
\nu_t^{n,k+1}=\tilde P^n_{t0}\mu_0^n+\int_0^t\tilde P^n_{ts}K^+(\nu_s^{n,k})ds,\q t\ge0.
$$
Set $\nu_t^{n,\infty}=\lim_{k\to\infty}\nu_t^{k,n}$. 
Then $(\nu_t^{n,\infty})_{t\ge0}$ satisfies \eqref{E}, so $\nu_t^{n,\infty}=\mu_t^n$ for all $n$ and $t$.
Now, for $0\le s<t<T$, we have $\tilde P_{ts}^n\le P_{ts}^\mu$ and $\mu_t^n\ua\mu_t$.
Hence $\nu^{n,k}_t\le\nu_t^k$ for all $n$ and $k$ and so $\nu_t^k\ua\mu_t$ as $k\to\infty$ for all $t<T$.

It is easy to see by induction that, if $\mu_0$ has integer mass distribution, 
then this property is inherited by $(\nu_t^k)_{t<T}$ for all $k$, and hence by $(\mu_t)_{t<T}$. 
The same holds for the property of absolutely continuous mass distribution.  
\end{proof}

\section{Einstein--Smoluchowski coagulation}\label{BC}
Einstein \cite{AE1905} derived a formula for the diffusivity $D$ (here meaning $a/2$) of a spherical particle of radius $r$, suspended in a fluid of viscosity $\eta$
$$
D=\frac{kT}{6\pi\eta r}
$$
where $k$ is Boltzmann's constant and $T$ is the temperature.
Smoluchowski \cite{S} considered a population of such particles, subject to coagulation on collision. 
He argued that this would result in a rate for coagulation between particles of radius $r$ and $r_*$ given by
$$
4\pi(D+D_*)(r+r_*).
$$
Here, of course $d=3$. 
In our notation, after choosing suitable units, this leads to the case
$$
a(y)=y^{-1/3},\q K(y,y_*)=(y^{-1/3}+y^{-1/3}_*)(y^{1/3}+y^{1/3}_*).
$$
Hammond and Rezakhanlou \cite{MR2308858} (also Yaghouti, Rezakhanlou and Hammond \cite{MR2554038}) derived the Smoluchowski kernel rigorously, 
starting from a system Brownian particles, moving independently, 
except for random coalescence events on the Boltzmann--Grad scale.
This is a case then where the coagulation-diffusion equation \eqref{CD} is of some interest.
Mischler and Rodriguez Ricard's existence result \cite{MR1979355} applies here, for the variant model with diffusion reflected in a smoothly bounded domain.
Uniqueness holds by Rezakhanlou \cite{MR2726119}, provided we impose a positive lower cut-off on particle mass.

We can recover existence, uniqueness and mass conservation, in the whole space $\R^3$, without lower mass cut-off.
To see this, take $E=(0,\infty)$ and consider the sublinear function $\phi(y)=y^{1/6}+y^{5/6}$.
Then $w(y)=a(y)^{d/2}\phi(y)=y^{-1/3}+y^{1/3}\ge1$ and $K(y,y')\le w(y)w(y')$, so condition \eqref{A} holds. 
Moreover, if we take $v(y)=2y^{-1/3}$, then $v/w$ is bounded, $a^{-3/2}vw$ is subadditive, and condition \eqref{VW} holds.
So we can apply Theorem \ref{T1}, provided we impose on the initial measure $\mu_0\in\cM$ the conditions $\mu_0(dx,dy)\le dx\otimes\mu_0^*(dy)$ and
$$
\int_0^\infty\int_{\R^3}(y^{-1/3}+y^{1/3})\mu_0(dx,dy)<\infty,\q
\int_0^\infty(y^{-2/3}+y^{2/3})\mu_0^*(dy)<\infty
$$
for some measure $\mu_0^*$.
Then there is a unique (global) strong solution to the coagulation-diffusion equation \eqref{CD}.
Moreover, if further
$$
\int_0^\infty\int_{\R^3}y^{4/3}\mu_0(dx,dy)<\infty
$$
then the total mass $\int_0^\infty\int_{\R^3}y\mu_t(dx,dy)$ is conserved for all $t$.

\bibliography{p}
\end{document}